\documentclass[3p,times]{elsarticle}
\usepackage{hyperref}
\usepackage{amsmath,amssymb,ntheorem, pifont}
\usepackage{graphicx}
\usepackage{caption}
\usepackage {setspace,bbding}
\usepackage{tabularx}
\usepackage{cleveref}
\usepackage{amssymb}
\usepackage[figuresright]{rotating}
\usepackage{algorithmic,algorithm}

\begin{document}

\begin{frontmatter}
\title{Rate Analysis of Coupled Distributed Stochastic Approximation for Misspecified Optimization \tnoteref{t1}}
\tnotetext[t1]{The paper was sponsored by the National Key Research and Development Program of China under No 2022YFA1004701,  the National Natural Science Foundation of China under No. 72271187 and No. 62373283, and partially by Shanghai Municipal Science and Technology Major Project No. 2021SHZDZX0100, and National Natural Science Foundation of China (Grant No. 62088101).}
\author[a1]{Yaqun Yang} 
\address[a1]{Department of Control Science and Engineering, Tongji University, Shanghai, 201804, China}

\author[a2]{Jinlong Lei\corref{mycorrespondingauthor}} 
\cortext[mycorrespondingauthor]{Corresponding author}
\address[a2]{Department of Control Science and Engineering, Tongji University, Shanghai, 201804, China; Shanghai Institute of Intelligent Science and Technology, Tongji University, Shanghai, 200092, China}

\ead{yangyaqun@tongji.edu.cn,  leijinlong@tongji.edu.cn}
\begin{abstract}
	    \qquad We consider an $n$ agents distributed optimization problem with imperfect information characterized in a parametric sense, where the unknown parameter  can  be solved by  a distinct distributed  parameter learning problem. Though each agent only has access to its local parameter learning and computational problem, they mean to collaboratively minimize the average of their local cost functions. 
To address the special optimization problem, we propose a coupled distributed stochastic approximation algorithm, in which every agent updates the current beliefs of its unknown parameter and decision variable by stochastic approximation method; then  averages the beliefs and decision variables of its neighbors over network in consensus protocol. Our interest lies in the convergence analysis of this algorithm. We quantitatively  characterize the factors that affect the algorithm performance, and prove that the  mean-squared error of the decision variable is  bounded by $\mathcal{O}(\frac{1}{nk})+\mathcal{O}\left(\frac{1}{\sqrt{n}(1-\rho_w)}\right)\frac{1}{k^{1.5}}
+\mathcal{O}\big(\frac{1}{(1-\rho_w)^2} \big)\frac{1}{k^2}$, where $k$ is the iteration count and $(1-\rho_w)$ is the spectral gap of the network weighted adjacency matrix. It reveals that the network connectivity characterized by $(1-\rho_w)$ only influences the high order of convergence rate, while the domain rate  still acts the same as the centralized algorithm. In addition, we analyze  that the transient iteration needed for reaching its dominant rate $\mathcal{O}(\frac{1}{nk})$ is $\mathcal{O}(\frac{n}{(1-\rho_w)^2})$. Numerical experiments are carried out to demonstrate the theoretical results by taking different CPUs as agents, which is more applicable to real-world distributed scenarios.
\end{abstract}

\begin{keyword}
	Distributed Coupled Optimization, Stochastic Approximation, Misspecification, Convergence Rate Analysis

\end{keyword}

\end{frontmatter}
\section{Introduction}
\setlength{\jot}{-0.0005ex} 
In recent years, distributed optimization has drawn much research attention in various fields including economic dispatch\cite{dis_economic1,dis_economic2}, smart grids \cite{dis_grids, dis_cont_power, smartgrid1},  automatic controls\cite{dis_control, dis_auto2, dis_auto3} and machine learning \cite{ML1, ML2}. 
In distributed scenarios, each agent only preserves its local information, while they can exchange information with others over a connected network to cooperatively minimize the average of all agents' cost functions \cite{dis_frame,survey2}.  
There are several approaches for resolving  distributed optimization  problems such as (primary) consensus-based, duality-based, and constraint exchange methods, where  the primal approaches characterized by gradient-based algorithms have attracted the most research attention due to their satisfactory performance and well-scalable nature\cite{disgrad}.
The distributed dual approaches based on Lagrange method  regularly use  dual decomposition like the alternating direction method of multipliers (ADMM)\cite{disadmm}. Constraint exchange method is another prevalent scheme where the information exchanged by agents amounts to constraints\cite{disconstraint}.

However, among various formulations in distributed optimization, a crucial assumption is that we need precise objective functions (or problem information), i.e., all parameters in the model are precisely known. Yet in many economic and engineering problems, parameters of the functions are unknown but we may have access to observations that can aid in resolving this misspecification.   For example, in the Markowitz profile problem, it is routinely assumed that the expectation or covariance matrices associated with a collection of stocks are accurately available, but in reality, it needs empirical estimates via past data\cite{mislagrange}.

This paper is devoted to proposing distributed algorithms for resolving optimization problems with parametric misspecification, and quantitatively characterizing the influence of network properties, the heterogeneity of agents, initial states, etc. on the algorithm performance. This work is primarily centered around conducting a comprehensive theoretical analysis of convergence.  We begin by initiating the problem formulation.
\subsection{ Problem Formulation}\label{pro_form}
In this article, we consider a static misspecified distributed optimization problem defined as follows:
\begin{equation}
	\mathcal{C}_x(\theta_*): \qquad \min_{x\in\mathbb{R}^p}f(x,\theta_*)= \frac{1}{n}\sum\nolimits_{i=1}^{n}f_i(x,\theta_*)\label{0.1},
\end{equation}
where $f_i(x,\theta_*)\triangleq \mathbb{E}[\tilde{f}_i(x,\theta,\xi_i)] $ is the local cost function only accessible for agent  $i\in \mathcal{N} \triangleq \{1,2,...,n\}$.
Suppose that for any $i\in \mathcal{N} $, $\xi_i: \Omega_x \rightarrow \mathbb{R}^d$ are mutually independent random variables  defined on a probability space $(\Omega_x, \mathcal{F}_x, \mathbb{P}_x)$. Here, $\theta_*\in \mathbb{R}^q$ denotes the unknown parameter, which is a solution to a distinct convex problem.
\begin{equation}
	\mathcal{L}_{\theta}: \qquad \min_{\theta\in \mathbb{R}^q} h(\theta)=\frac{1}{n}\sum\nolimits_{i=1}^{n}h_i(\theta)\label{0.2},
\end{equation}
where $h_i(\theta)\triangleq\mathbb{E}[\tilde{h}_i(\theta,\zeta_i)]$ is the local parameter learning function only accessible for agent $i\in \mathcal{N} $, and for any $i \in\mathcal{N}$, $\zeta_i:\Omega_{\theta}\rightarrow\mathbb{R}^m$ are mutually independent random variables defined on a probability space $(\Omega_{\theta},\mathcal{F}_{\theta}, \mathbb{P}_{\theta})$.
Problems in the form \cref{0.1} and \cref{0.2} jointly formulate an unknown coupled  distributed optimization scheme consisting of both  {\itshape computational problem} and {\itshape learning problem}, where the learning problem is independent of the computational one. We have depicted the problem setting  in \cref{fig:problem set up}. 
\begin{figure}[htbp]
	\centering
	\includegraphics[width=100mm]{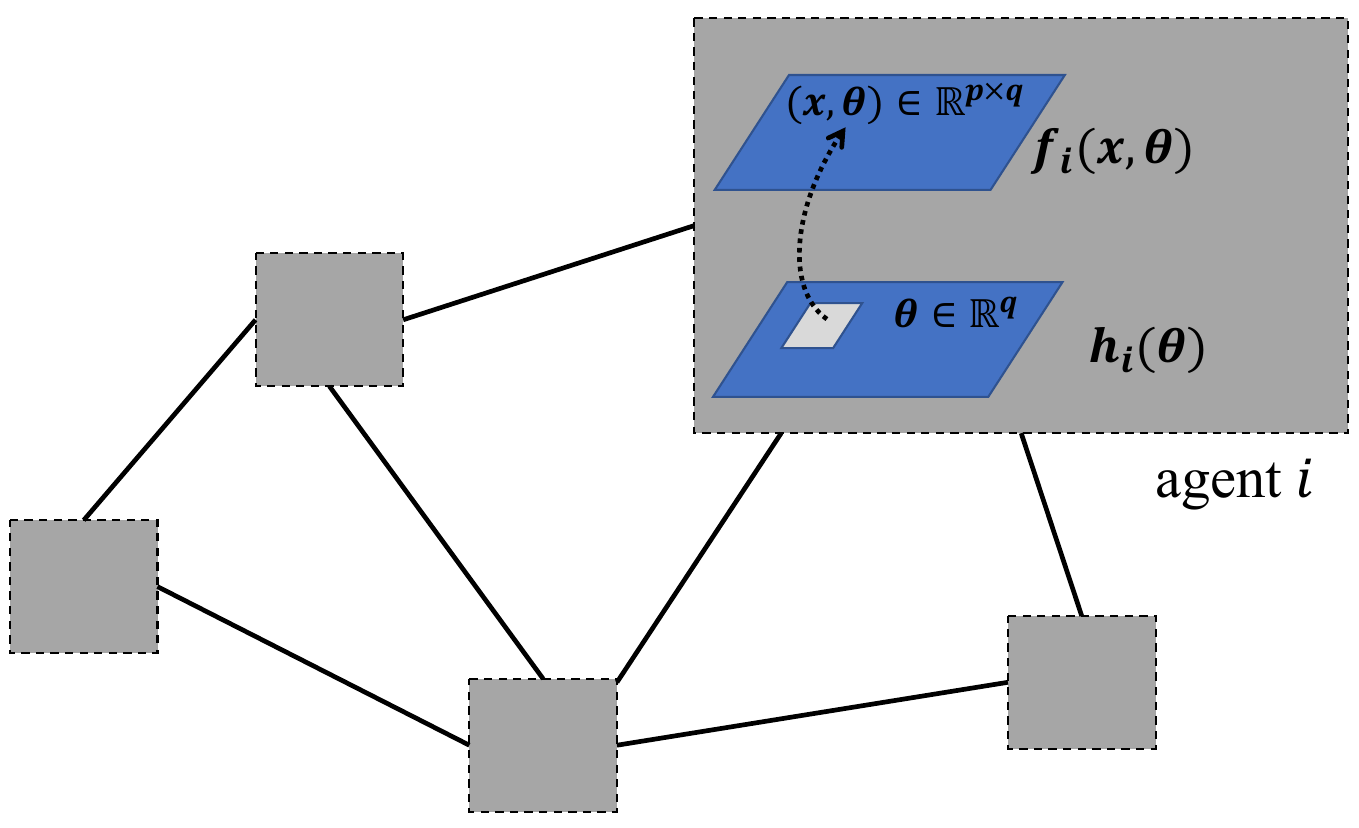}
	\caption{The problem setup: a connected network of communicating agents, where  each agent preserving a local learning problem $h_i$ and computational problem $f_i$ correlated with $h_i$ through the unknown parameter $\theta$, while they cooperate to solve the distributed coupled optimization problem.}
	\setlength{\belowcaptionskip}{-7cm}
	\label{fig:problem set up}
\end{figure}
\vspace{-12pt}
\subsection{Prior Work}
We now give a review of prior work for resolving optimization problems with unknown parameters.

{\it Robust optimization approach.}  Robust optimization considers the optimization problem where the parameter $\theta$ is unavailable but one can have access to its uncertainty set, say $\mathcal{U}_{\theta}$\cite{bertsimas2011RO}. The key idea is to optimize against the worst-case realization within this set, i.e.,
$$
\min_{x\in\mathbb{R}^p}\max_{\theta\in \mathcal{U}_{\theta}} f(x,\theta).$$
Robust optimization is shown to be a useful technique in the resolution of problems arising from  control, design, and optimization \cite{ben2009robust}. However, it usually produces conservative solutions and sometimes is   intractable  when poor set $\mathcal{U}_{\theta}$ is chosen (e.g. the set is given by unexplicit systems of non-convex inequalities)\cite{robustop}. 

{\it Stochastic optimization.} Unlike robust optimization, in a stochastic optimization scenario one may obtain statistical or distributional information about the unknown parameter. For example,  $\theta$ follows a probability distribution $\mathcal{D}$\cite{jie2018stochastic}, the optimal solution is gained by minimizing the expectation of cost functions,
\[
\min_{x \in \mathbb{R}^p} \mathbb{E}_{\theta\sim \mathcal{D}}[f(x,\theta)].
\]

Stochastic optimization has been widely investigated in telecommunication, finance and machine learning \cite{shapiro2021sto_prog}. In the scenario of a multi-agent network dealing with large datasets, stochastic optimization has become popular since it is challenging to calculate the exact gradient while the stochastic gradient is much easier to obtain. A key shortcoming in using  stochastic optimization models for resolving optimization problems with unknown parameters lies in that it needs the distribution of $\theta$, which might be a stringent requirement when the available data for estimating is limited or noisy. In such cases, the resulting distribution estimates may be unreliable or biased, leading to suboptimal solutions or even infeasible solutions\cite{wilson2018adaptive}. Alternatively, suppose that $\theta_*$ can be learnt by a suitably defined estimation  problem, then it brings about  the following approach.

{\it Data-driven learning approach.}
As data availability reaches hitherto unseen in recent years, we can use data-driven approaches to lessen or even eliminate the impact of  model uncertainty. For example, the model parameter  $\theta$ can be obtained by solving a suitably defined learning problem $l(\theta)$ (see e.g., \cite{jiang2016imperfect}),
\begin{equation}
	\min_{x \in \mathbb{R}^p} \left \{f(x,\theta_*): \theta_*\in \mathop{\arg\min}\limits_{\theta\in \mathbb{R}^q} l(\theta) \right \}.\label{cen_un}
\end{equation}
Computational evidence in portfolio management and queueing confirm that data-driven sets significantly outperform traditional robust optimization techniques\cite{robustop}.

A natural question is whether this problem could be solved in a sequential method, i.e., first accomplish estimating $\theta_*$ with high accuracy and then solve the core computational optimization problem with the achieved estimation $\hat{\theta}$. However, they have some disadvantages discussed in \cite{jiang2016imperfect, oco, miscen}: on the one hand, the large-scale parameter learning problem will lead to long time waiting for solving the original problem.  On the other hand, this scheme provides an approximate solution $\hat{\theta}$, then the corrupt error might propagates into the computational problem. As such, sequential methods cannot provide asymptotically accurate convergence.
Therefore, an alternative simultaneous approach   is designed (see e.g.,  \cite{jiang2016imperfect,guolei}), which use observations to get an estimation  $\theta_k$ of unknown parameters $\theta^*$ at each time instant $k$;  then update the upper optimization problem by taking the estimated parameter $\theta_k$  as ``true" parameter. This simultaneous approach can generate  a sequence $\{(x_k,\theta_k)\}$ that converges to a minimizer of $f(x,\theta_*)$ and $l(\theta)$ respectively \cite{jiang2016imperfect}.

Such data-driven learning approaches for unknown parameter has gradually attracted  research attention recently. For example, the authors of
%
\cite{jiang2016imperfect} presented a centralized coupled stochastic optimization scheme to solve problem \eqref{cen_un} and showed the convergence properties in regimes when the function is either strongly convex or merely convex. Then \cite{miscen} extended it smooth or nonsmooth functions $f$ and presented an averaging-based subgradient approach, but it is still a centralized scheme. In addition, the authors of \cite{oco} divided the optimization problem with uncertainty into two paradigms: robust optimization and joint estimation optimization, and they exploited these  two problem structures in online convex optimization and gave regret analysis under different conditions. The  recent work \cite{mislagrange} investigated the  misspecified conic convex programs, and developed  a centralized first-order inexact augmented Lagrangian scheme for computing the optimal solution while simultaneously learning the unknown parameters. The aforementioned work \cite{mislagrange,oco,jiang2016imperfect,miscen} all investigated  centralized methods, while there are some other work exploit distributed approaches. For example,   \cite{distributedmis} considered the distributed stochastic optimization with imperfect information, while it only showed that the generated iterates converge almost surely to the optimal solution.
Though the work \cite{personalizedgra_traking} presented a distributed problem with a composite structure consisting of an exact engineering part and an unknown personalized part, 
it exhibits a bounded regret under certain conditions. 
\subsection{Gaps and Motivation}

Recalling the problem setup in \cref{pro_form}, our research falls into distributed data-driven stochastic optimization scenario. Taking into account the research that is most pertinent to this paper, the majority of previous studies have primarily concentrated on centralized inquiries (see e.g. \cite{mislagrange,oco,jiang2016imperfect,miscen}), 
while the distributed schemes \cite{distributedmis,personalizedgra_traking} mainly investigated the asymptotic convergence. It remains unknown how to design an efficient distributed algorithm, how does the network connectivity influence the algorithm performance, and whether the rate can reach the same order as the centralized scheme? To be specific, this paper is motivated by the following gaps: (i) previous work on unknown parameter learning problems focused on the centralized scheme, the distributed data-driven stochastic approximation method is less studied; (ii) the discussion of convergence analysis especially how factors such as the number of agents, the network  connectivity, and the heterogeneity of agents influence the rate of algorithm is rarely studied in details;  (iii) the gap between centralized and distributed algorithm under imperfect information need to be specified, or in other words  can we find the transient  time when the rate of distributed algorithms reach  the same  order as that of the centralized scheme?

\subsection{ Outline and Contributions}  To address these gaps, we  propose a data-driven coupled distributed stochastic approximation method to resolve this special optimization problem and give a precise convergence rate analysis of our algorithm. The main contributions are summarized as follows, and the comparison with previous works is shown in \cref{com}.
\begin{table}[tbhp]
	\footnotesize
	\vspace{-5pt}
	\caption{Work comparation with previous studies}
	\label{com}
	\vspace{-5pt}
	\begin{center}
		\begin{tabular}{|c|c|c|c|c|c|}
			\hline
			Paper & Distributed & Imperctect Information  & Stochastic & Rate & Factor Influence\\
			\hline
			\cite{oco,jiang2016imperfect,miscen} & \XSolidBrush & \Checkmark & \XSolidBrush & $\mathcal{O}\left(\frac{1}{k}\right)$ & \XSolidBrush \\
			\hline
			\cite{du2022computational,pu2021sharp} &\Checkmark  &  \XSolidBrush & \Checkmark & $\mathcal{O}\left(\frac{1}{k}\right)$ &\Checkmark \\
			\hline
			\cite{personalizedgra_traking} & \Checkmark  & \Checkmark  & \XSolidBrush &$\backslash$ & \Checkmark \\
			\hline
			\cite{distributedmis} & \Checkmark & \Checkmark & \Checkmark &$\backslash$ & \XSolidBrush\\
			\hline
			\cite{liang2019distributed} & \Checkmark & \XSolidBrush & \XSolidBrush &$\mathcal{O}\left(\frac{1}{\sqrt{k}}\right)$ & \XSolidBrush\\
			\hline
			Our Work & \Checkmark & \Checkmark & \Checkmark & $\mathcal{O}\left(\frac{1}{k}\right)$ & \Checkmark\\
			\hline
		\end{tabular}
	\end{center}
\end{table}
\vspace{-2pt}
%

(1) We propose a coupled distributed stochastic approximation algorithm that generates iterates \{$(\pmb{\mathbf{x}}(k), \pmb{\mathbf{\theta}}(k))$\} for the distributed stochastic optimization  problem \eqref{0.1}  with the  unknown parameter learning prescribed by  a separate distributed stochastic optimization  problem  \eqref{0.2}. Our model framework builds upon previous research involving deterministic and stochastic gradient schemes. This is particularly relevant for certain studies where waiting for parameter learning to complete over an extended period is not feasible, or for real-world problems in which parameter learning and objective optimization are intertwined.

(2) We characterize the  convergence rate of the presented algorithm that  combined the distributed consensus protocol with stochastic gradient descent methods. On the one hand, we prove that the upper bound of expected consensus error for every agent decay at rate $\mathcal{O}(\frac{1}{k^2})$; on the other hand, we also show that the upper bounded of expected optimization error is $\mathcal{O}(\frac{1}{k})$. We then give the sublinear convergence rate   and quantitatively characterize some factors affecting the convergence rate, such as the network size, spectral gap of the weighted adjacency matrix, heterogenous of individual function, and  initial values. We emphasize   that the  mean-squared error of the decision variable is  bounded by $\mathcal{O}(\frac{1}{nk})+\mathcal{O}\left(\frac{1}{\sqrt{n}(1-\rho_w)}\right)\frac{1}{k^{1.5}}
+\mathcal{O}\big(\frac{1}{(1-\rho_w)^2} \big)\frac{1}{k^2}$, which  indicates that the network connectivity characterized by $(1-\rho_w)$ only influences the high order of convergence rate, while the domain rate $\mathcal{O}(\frac{1}{ k})$ still acts the same as the centralized algorithm.

(3) We analyze the transient time $K_T$ for the proposed algorithm, namely, the number of iterations before the algorithm reaches its dominant rate.  Specially, we show that when the iterate $k\geq K_T$, the dominant factor influencing the convergence rate is related to stochastic gradient descent,
while for small $k<K_T$, the main factor influencing the convergence rate originates from the distributed average consensus method. Finally, we show that the algorithm asymptotically achieves the same network-independent convergence rate as the centralized scheme.

The paper is organized as follows. We present the algorithm and the related assumptions in \cref{sec:algorithm}. In \cref{sec:Aux}, the auxiliary results supporting the convergence rate analysis is proved. Our main results are in \cref{sec:mainreslut}. Experimental
results are  implemented in \cref{sec:experiments}, while the concluding remarks are given  in \cref{sec:conclusions}.

\textbf{Notation. }
All vectors in this paper are column vectors. 
The structure of the communication network is modeled by an undirected weighted graph $\mathcal{G} = (\mathcal{N},\mathcal{E},\mathcal{W})$ in which $\mathcal{N}=\{1,2,...,n\}$ represents the set of vertices. $\mathcal{E}\subseteq \mathcal{N}\times\mathcal{N}$ is the set of edges.  $W=[w_{ij}]_{n\times n}\in \mathbb{R}^{n\times  n}$ denotes the weighted adjacency matrix, $w_{ij}> 0$ if and only if agent $i$ and agent $j$ are connected, $w_{ij}=w_{ji}=0$ otherwise. Each agent(vertice) has a set of neighbors $\mathcal{N}_i=\{j|(i,j)\in\mathcal{E}\}$. The graph is connected means for every pair of nodes $(i,j)$ there exists a path of edges that goes from $i$ to $j$. $||\cdot||$ denotes $\mathcal{L}_2$-norm for vectors and Euclidean norm for matrices. The optimal solution denote as ($x_*, \theta_*$).
\vspace{-10pt}
\section{Algorithm and Assumptions}
\label{sec:algorithm}
To solve this special optimization problem consisting of the {\itshape computational problem} \cref{0.1} and the {\itshape learning problem} \cref{0.2}, we will propose a {\itshape Coupled Distributed Stochastic Approximation (CDSA) Algorithm}  and impose some conditions for   rate analysis in this section.
\subsection{Algorithm Set Up}
As mentioned previously, each agent $i$ only knows its local core computational function $f_i(x,\theta)$ and parameter learning function $h_i(\theta)$, while they are connected by a network $\mathcal{G} = (\mathcal{N},\mathcal{E} ,\mathcal{W})$ in which agents may communicate and exchange information with their neighbors $\mathcal{N}_i=\{j|(i,j)\in\mathcal{E}\}$. 
At each step $k\geq 0$, every agent $i$ holds an estimate of the decision variable and unknown parameter, denoted by $x_i(k)$ and $\theta_i(k)$, respectively. Suppose that every agent has access to a stochastic first-order oracle  that can generate  stochastic gradients $g_i(x_i(k),\theta_i(k),\xi_i(k))\triangleq\nabla_x f_i(x_i(k),\theta_i(k),\xi_i(k))$ and $\phi_i(\theta_i(k),\zeta_i(k))\triangleq\nabla_{\theta} h_i(\theta_i(k),\zeta_i(k))$ respectively (where $\xi_i, \zeta_i, i=1,2,...,n$ are independent random variables). Then,  every agent updates its parameters through stochastic gradient descent method to obtain temporary  estimates $\widetilde{x}_i(k)$ and $\widetilde{\theta}_i(k)$. Next, each agent communicates with its local neighbors and gathers temporary parameters information over a static connected network to renew the iterates $x_i(k+1)$ and  $\theta_i(k+1)$ based on the consensus protocol. We summarize the  pseudo-code is in \cref{alg:CDSA}.
\begin{algorithm}
	\caption{Coupled Distributed Stochastic Approximation (CDSA)}
	\label{alg:CDSA}
	\begin{algorithmic}
		\STATE{\textbf{Initialization:} $W=[w_{ij}]_{n\times n}; (x_i(0),\theta_i(0)), \forall i\in \mathcal{N}$}
		\STATE{\textbf{Evolution:} for $k=0,1,2,...; \forall i\in \mathcal{N}$}
		\STATE{\textbf{\qquad Compute:} stochastic gradient $\phi_i\left(\theta_i(k),\zeta_i(k)\right)$ and $g_i(x_i(l),\theta_i(k),\xi_i(k))$}
		\STATE{\textbf{\qquad Choose:}} stepsize $\alpha_k$ and $\gamma_k$ (To be introduced in \cref{uni.bound_stepsize})
		\STATE{\textbf{\qquad Update } according to the following stochastic gradient descent method.}
		\begin{align}
			\widetilde{x}_i(k)&=x_i(k)-\alpha_kg_i(x_i(k),\theta_i(k),\xi_i(k))\notag\\
			\widetilde{\theta}_i(k)&=\theta_i(k)-\gamma_k\phi_i\left(\theta_i(k),\zeta_i(k)\right)\notag		
		\end{align}
		\STATE{\textbf{\qquad Gather}} information $\widetilde{x}_j(k),\widetilde{\theta}_j(k)$ from its neighbors $j\in \mathcal{N}_i$
		and renew the iterates by the consensus protocol below.
		\begin{align}		
			x_i(k+1)&=\sum\nolimits_{j\in\mathcal{N}_i}w_{ij}\widetilde{x}_j(k)\notag\\	\theta_i(k+1)&=\sum\nolimits_{j\in\mathcal{N}_i}w_{ij}\widetilde{\theta}_j(k)\notag
		\end{align}
	\end{algorithmic}
\end{algorithm}

We can rewrite Algorithm \ref{alg:CDSA} in a more compact form as follows.
\begin{align}
	x_i(k+1)&=\sum\nolimits_{j\in\mathcal{N}_i}w_{ij}(x_j(k)-\alpha_k g_j(x_i(l),\theta_i(k),\xi_i(k))),\label{3.1}\\
	\theta_i(k+1)&=\sum\nolimits_{j\in\mathcal{N}_i}w_{ij}\left(\theta_j(k)-\gamma_k\phi_j\left(\theta_i(k),\zeta_i(k)\right)\right).\label{3.2}
\end{align}
Define
\begin{align}
	\pmb{\mathbf{x}} & \triangleq [x_1,x_2,\cdots,x_n]^T\in \mathbb{R}^{n\times p},
	\pmb{\mathbf{\theta}}  \triangleq [\theta_1,\theta_2,\cdots,\theta_n]^T\in \mathbb{R}^{n\times q},\label{theta_vec}\\%
	\boldsymbol{\xi}&\triangleq [\xi_1,\xi_2,\cdots,\xi_n]^T\in \mathbb{R}^n,~\boldsymbol{\zeta}\triangleq [\zeta_1,\zeta_2,\cdots,\zeta_n]^T\in \mathbb{R}^n,\\
	\mathbf{g}(\pmb{\mathbf{x}},\pmb{\theta},\pmb{\mathbf{\xi}})&\triangleq[g_1(x_1,\theta_1,\xi_1),g_2(x_2,\theta_2,\xi_2),\cdots,g_n(x_n,\theta_n,\xi_n)]^T\in \mathbb{R}^{n\times p}\label{g_vec},\\%
	\boldsymbol{\phi}(\pmb{\theta},\pmb{\mathbf{\zeta}})& \triangleq [\phi_i\left(\theta_1,\zeta_1\right),\phi_i\left(\theta_2,\zeta_2\right),\cdots,\phi_i\left(\theta_n,\zeta_n\right)]^T\in \mathbb{R}^{n\times q}.\label{phi_vec}
\end{align}
Then equation (\ref{3.1}) and (\ref{3.2}) can be reformulated in the following vector formula.
\begin{align}
	\pmb{\mathbf{x}} (k+1)&=W\left(\pmb{\mathbf{x}}(k)-\alpha_k\mathbf{g}(\pmb{\mathbf{x}}(k),\pmb{\theta}(k),\pmb{\mathbf{\xi}}(k))\right),\label{vec_xiter}\\
	\pmb{\mathbf{\theta}} (k+1)&=W\left(\pmb{\mathbf{\theta}}(k)-\alpha_k\boldsymbol{\phi}(\pmb{\theta}(k),\pmb{\mathbf{\zeta}}(k))\right).\label{vec_thetaiter}
\end{align}
\vspace{-20pt}
\subsection{Assumptions}
In this subsection, we will specify the conditions for rate analysis of the CDSA algorithm.  We need to make some assumptions about the properties of objective functions in both learning and computation metrics to get the global optimal solution.  Besides,
we impose some constraints on conditional first and second moments of  ``stochastic gradient". Last but not least, we inherit the typical assumptions about communication networks as that of distributed algorithms.

\newtheorem{mythm}{Assumption}[subsection]
\begin{mythm}[Function properties]\label{assump. func_pro}
	(\romannumeral1) For every $\theta\in \mathbb{R}^q$, $f_i(x,\theta),i=1,\cdots,n$ is strongly convex and Lipschitz smooth in $x$ with constants $\mu_x$ and $L_x$, i.e.
	\begin{align*}
		(\nabla_x f_i(x',\theta)-\nabla_x f_i(x,\theta))^T(x'-x)&\geq \mu_x||x'-x||^2, \forall x, x'\in\mathbb{R}^p,\\
		||\nabla_x f_i(x',\theta)-\nabla_x f_i(x,\theta)||&\leq L_x||x'-x||,\forall x, x'\in \in\mathbb{R}^p.
	\end{align*}
	
	(\romannumeral2) For every $x\in \mathbb{R}^p$,  $f_i(x,\theta),i=1,\cdots,n$ is strongly convex and Lipschitz smooth in $\theta$ with constants $\mu_{\theta}$ and $L_\theta$ respectively, i.e.
	\begin{align*}
		(\nabla_x f_i(x,\theta')-\nabla_x f_i(x,\theta))^T(\theta'-\theta)&\geq \mu_{\theta}||\theta'-\theta||^2, \forall \theta,\theta'\in \mathbb{R}^q,\\
		||\nabla_x f_i(x,\theta')-\nabla_{\theta} f(x,\theta)||&\leq L_{\theta}||\theta'-\theta||, \forall \theta,\theta'\in \mathbb{R}^q.
	\end{align*}
	
	(\romannumeral3)The learning metric $h_i(\theta)$ for every $i\in \{1,2,...,n\}$ is strongly convex and Lipschitz smooth with constants $\nu_{\theta}$ and $C_{\theta}$, i.e.
	\begin{align*}
		(h(\theta)-h(\theta'))^T(\theta-\theta')&\geq \nu_{\theta}||\theta-\theta'||^2, \forall \theta,\theta'\in \mathbb{R}^q,\\
		||\nabla h(\theta)-\nabla h(\theta')||&\leq C_{\theta}||\theta-\theta'||, \forall \theta,\theta'\in \mathbb{R}^q.
	\end{align*}
\end{mythm}
Strong convexity assumptions indicate that both  computational problem and learning problem have a unique optimal solution $x_*\in \mathbb{R}^p$ and $\theta_*\in \mathbb{R}^q$ \cite{largeML}. The Lipschitz continuity  of  gradient functions ensure that the gradient doesn't change arbitrarily fast concerning the corresponding parameter vector. It is widely used in the convergence analyses of most gradient-based methods, without it, the gradient wouldn't provide a good indicator for how far to move to decrease the objective function\cite{largeML}.
These assumptions are satisfied for many machine learning problems, such as logistic regression, linear regression, and support vector machine (SVM).

Next, we define a new probability space $(\mathcal{Z},\mathcal{F},\mathbb{P})$, where $\mathcal{Z}\triangleq \Omega_x \times \Omega_{\theta}, \mathcal{F}\triangleq \mathcal{F}_x \times \mathcal{F}_{\theta}$ and $\mathbb{P}\triangleq \mathbb{P}_x\times \mathbb{P}_{\theta}$.
We use $\mathcal{F}(k)$ to denote the $\sigma$-algebra generated by $ \{(x_i(0),\theta_i(0)),(x_i(1),\theta_i(1)),...,(x_i(k),\theta_i(k))|i\in\mathcal{N}\}$. Then give the following assumptions related to the stochastic gradient estimator,
which assume that the stochastic gradient is an unbiased estimator of the true gradient, and the variance of the stochastic gradient is restricted.

\begin{mythm}[Conditional first and second moments]\label{assump. moment}
	For all $k\geq 0$ and $i\in \mathcal{N}$,there exist $\sigma_x>0, \sigma_{\theta}>0, M_x>0, M_{\theta}>0$, such that
	\begin{align*}
		&(a)\quad	\mathbb{E}_{\xi_i(k)}[g_i(x_i(k),\theta_i(k),\xi_i(k))|\mathcal{F}(k)]=\nabla_x f_i(x_i(k),\theta_i(k)), \quad a.s.,\\
		&(b)\quad	\mathbb{E}_{\zeta_i(k)}[\phi_i(\theta_i(k),\zeta_i(k))|\mathcal{F}(k)]=\nabla h_i(\theta_i(k)), \quad a.s.,\\
		&(c)\quad	\mathbb{E}_{\xi_i(k)}[||g_i(x_i(k),\theta_i(k),\xi_i(k))-\nabla_x f_i(x_i(k),\theta_i(k))	||^2|\mathcal{F}(k)], \notag\\
		&\qquad\qquad\qquad\qquad	\leq \sigma_x^2+M_x||\nabla_x f_i(x_i(k),\theta_i(k))||^2\quad a.s.,\\
		&(d)\quad	\mathbb{E}_{\zeta_i(k)}[||\phi_i(\theta_i(k),\zeta_i(k))-\nabla h_i(\theta_i(k))||^2|\mathcal{F}(k)]\leq \sigma_{\theta}^2+M_{\theta}||\nabla h_i(\theta_i(k))||^2	, \quad a.s.,
	\end{align*}
\end{mythm}

Next, we impose the connectivity condition on the graph, which  indicates that after multiple rounds of communication, information can be exchanged between any two agents.  This inherits the typical assumptions on consensus protocols \cite{fastdis_ave}.
\begin{mythm}[Graph and weighted matrix]\label{assump. graph}
	The graph $\mathcal{G}$ is static, undirected, and connected. The weighted adjacency matrix $W$ is nonnegative and doubly stochastic, i.e.,
	\begin{equation}
		W \boldsymbol{1}=\boldsymbol{1},\boldsymbol{1}^TW=\boldsymbol{1}^T\label{double stoch.matrix}
	\end{equation}
	where $\boldsymbol{1}$ is the vector of all ones.
\end{mythm}

Next, we state two lemmas that partially explain the practicability of  \cref{alg:CDSA}   based on the aforementioned  assumptions.

\newtheorem{lemma1}{Lemma}[subsection]
\begin{lemma1}\label{iteration gradient work}\cite[Lemma 10]{qu2017harnessing}
	For any $x\in\mathbb{R}^p$,	define $x^+=x-\alpha \nabla f(x)$. Suppose that  $f$ is strongly convex with constant $\mu$ and its gradient function is Lipschitz continuous with constant $L$. If $\alpha\in(0,2/L)$, we then have
	$
	||x^+-x_*||\leq \lambda||x-x_*||,$
	where $\lambda\triangleq\max (|1-\alpha\mu|,|1-\alpha L|)$.
\end{lemma1}

It can be observed from the above lemma that as long as we choose a proper stepsize ($0<\alpha<2/L$), the distance to optimizer shrinks by a ratio $\lambda<1$ at each step for strongly convex and smooth functions.
While the following lemma reveals that under distributed algorithm with linear iteration, the gap between the current iteration and consensus optimal solution is decreased by a ratio $\rho_w<1$ compared to the last iteration.
\begin{lemma1}\label{iteration consensus work} \cite[Theorem 1]{fastdis_ave}
	Let Assumption \ref{assump. graph} hold, and $\rho_w$ denote the spectural norm of matrix $W-\frac{\pmb{1}\pmb{1}^T}{n}$. Thus $\rho_w<1$. Define $\pmb{\omega}^+=W\pmb{\omega}$ for any $\pmb{\omega}\in\mathbb{R}^{n\times p}$. We then have $||\pmb{\omega}^+-\pmb{1}\bar{\omega}||\leq\rho_w||\pmb{\omega}-\pmb{1}\bar{\omega}||, $
	where $\bar{\omega} \triangleq \frac{1}{n}\pmb{1}^T\pmb{\omega}$.
\end{lemma1}

The aforementioned lemmas show that both the gradient descent method and distributed linear iteration can move  the decision variable  towards the optimal solution with
linear decaying rates. Thus, our algorithm consisting of both approaches might  find the optimal solution efficiently.
We will rigorously prove the convergence rate of \cref{alg:CDSA}  in the following two sections.

\vspace{-5pt}
\section{Auxiliary Results}
\label{sec:Aux}
In this section, we will present some results to assist subsequent convergence rate analysis. We first give some preliminary bound which will be used  for later proof, then present the supporting lemmas concerning recursions for expected optimization error and expected consensus error, and finally, we prove that under diminishing stepsize, the mean-squared distance   between the current iterate and the optimal solution is uniformly bounded.
\subsection{Preliminary Bound}
For simplicity, we denote
\begin{spacing}{1}
	\begin{align}
		\bar{x}(k)\triangleq\frac{1}{n}\sum\nolimits_{i=1}^{n}&x_i(k),~
		\bar{\theta}(k)\triangleq\frac{1}{n}\sum\nolimits_{i=1}^{n}\theta_i(k),\label{bar_xtheta}\\ \bar{g}(\pmb{\mathbf{x}}(k),\pmb{\mathbf{\theta}}(k),\pmb{\mathbf{\xi}}(k))&\triangleq\frac{1}{n}\sum\nolimits_{i=1}^{n}g_i(x_i(k),\theta_i(k),\xi_i(k))\label{bar_g},\\
		\bar{\nabla}_xF(\pmb{\mathbf{x}}(k),\pmb{\mathbf{\theta}}(k))&\triangleq \frac{1}{n} \sum\nolimits_{i=1}^n \nabla_x f_i\left(x_i(k) , \theta_i(k)\right)\label{bar_F}.
	\end{align}
\end{spacing}
We will show in the following lemma that  with Assumptions \ref{assump. func_pro} and  \ref{assump. moment}, the conditional squared  distance between the gradient $\bar{\nabla}_xF(\pmb{\mathbf{x}}(k),\pmb{\mathbf{\theta}}(k))$ and its estimate can be bounded by linear combinations of squared errors $||\pmb{\mathbf{x}}(k)-\pmb{1}x_*^T||^2$  and   $||\pmb{\mathbf{\theta}}(k)-\pmb{1}\theta_*^T||^2$. For completeness, its proof is given  in \cref{appA}.

\begin{lemma1}\label{55.3}
	Let Assumption \ref{assump. func_pro} and \ref{assump. moment} hold. Then for any $k\geq0$,
	\begin{align} \mathbb{E}[||\bar{g}(\pmb{\mathbf{x}}(k),\pmb{\mathbf{\theta}}(k),\pmb{\mathbf{\xi}}(k))&-\bar{\nabla}_xF(\pmb{\mathbf{x}}(k),\pmb{\mathbf{\theta}}(k))||^2|\mathcal{F}(k)] \notag\\
		&\leq \frac{3M_xL_x^2}{n^2}||\pmb{\mathbf{x}}(k)-\pmb{1}x_*^T||^2+\frac{3M_xL_{\theta}^2}{n^2}||\pmb{\mathbf{\theta}}(k)-\pmb{1}\theta_*^T||^2+\frac{\bar{M}}{n}\label{5.1}, \\
		\text{where}\quad		
		\bar{M}&=\frac{3M_x\sum\nolimits_{i=1}^{n}||\nabla_x f_i(x_*,\theta_*)||^2}{n}+\sigma_x^2\label{barM}.
	\end{align}
\end{lemma1}

The following lemma shows the gap between the gradient of objective function
at the consensual points $(\bar{x}(k),\bar{\theta}(k))$, denoted by $\frac{1}{n}\sum\nolimits_{i=1}^{n}\nabla_xf_i(\bar{x}(k),\bar{\theta}(k))$,  and at current iterates  $\frac{1}{n}\sum\nolimits_{i=1}^{n}\nabla_xf_i(x_i(k),\theta_i(k))$  can also be bounded by linear combinations of $||\pmb{\mathbf{x}}(k)-\pmb{1}x_*^T||^2$  and  $||\pmb{\mathbf{\theta}}(k)-\pmb{1}\theta_*^T||^2$. The precise proof is in \cref{appB}.

\begin{lemma1}\label{55.4}
	Let Assumption \ref{assump. func_pro} hold. Then for any $k\geq0$,
	\begin{equation}
		||\nabla_xf(\bar{x}(k),\bar{\theta}(k))-\bar{\nabla}_xF(\pmb{\mathbf{x}}(k),\pmb{\mathbf{\theta}}(k))||\leq\frac{L_x}{\sqrt{n}}||\pmb{\mathbf{x}}(k)-\pmb{1}\bar{x}(k)^T||+\frac{L_{\theta}}{\sqrt{n}}||\pmb{\mathbf{\theta}}(k)-\pmb{1}\bar{\theta}(k)^T||.
	\end{equation}
\end{lemma1}\label{yin5.2}

The above two lemmas, providing the related upper bounds of functions, are derived by  virtue of the Lipschitz smooth assumption.
They are essential for the subsequent  convergence analysis.
\subsection{Supporting Lemmas}\label{supprotlemma}
In this subsection, we present some results concerning \emph{expected optimization error} $\mathbb{E}[||\bar{x}(k)-x_*||^2]$ and \emph{expected consensus error} $\mathbb{E}[||\pmb{\mathbf{x}}(k)-\pmb{1}\bar{x}(k)^T||^2]$ for core computational problem, while the discussion of parameter learning problem can be found in \cite{pu2021sharp}. For ease of presentation, we denote
\begin{align}
	U_1(k) \triangleq &\mathbb{E}[||\bar{x}(k)-x_*||^2],
	V_1(k) \triangleq \mathbb{E}[||\pmb{\mathbf{x}}(k)-\pmb{1}\bar{x}(k)^T||^2],\label{U_1V_1}\\
	U_2(k) \triangleq &\mathbb{E}[||\bar{\theta}(k)-\theta_*||^2],
	V_2(k) \triangleq \mathbb{E}[||\pmb{\mathbf{\theta}}(k)-\pmb{1}\bar{\theta}(k)^T||^2].\label{U_2V_2}
\end{align}

Next we will bound $U_1(k+1)$ and $V_1(k+1)$ by error terms at iteration $k$.
The precise proof of Lemma \ref{55.5} is in \cref{appD}.
\begin{lemma1}\label{55.5}
	Let Assumption \ref{assump. func_pro}$\sim$\ref{assump. graph} hold, under \cref{alg:CDSA},
	
	(\pmb{A}) Supposing stepsize $\alpha_k\leq \frac{1}{L_x}$, we have
	\begin{align}
		U_1(k+1)&\leq(1-\alpha_k\mu_x)^2U_1(k)+\frac{\alpha_k^2L_x^2}{n}V_1(k)+\frac{\alpha_k^2L_{\theta}^2}{n}V_2(k)\notag\\
		&\quad+\frac{2L_xL_{\theta}\alpha_k^2}{n}\mathbb{E}[||\pmb{\mathbf{x}}(k)-\pmb{1}\bar{x}(k)^T||||\pmb{\theta}(k)-\pmb{1}\bar{\theta}(k)^T||]\notag\\
		&\quad+\frac{2\alpha_kL_x}{\sqrt{n}}(1-\alpha_k\mu_x)\mathbb{E}[||\bar{x}(k)-x_*||||\pmb{\mathbf{x}}(k)-\pmb{1}\bar{x}(k)^T||]\notag\\
		&\quad+\frac{2\alpha_kL_{\theta}}{\sqrt{n}}(1-\alpha_k\mu_x)\mathbb{E}[||\bar{x}(k)-x_*||||\pmb{\theta}(k)-\pmb{1}\bar{\theta}(k)^T||]\notag\\
		&\quad+\alpha_k^2\left(\frac{3M_xL_x^2}{n^2}\mathbb{E}[||\pmb{\mathbf{x}}(k)-\pmb{1}x_*^T||]+\frac{3M_xL_{\theta}^2}{n^2}\mathbb{E}[||\pmb{\theta}(k)-\pmb{1}\theta_*^T||]+\frac{\bar{M}}{n}\right)	\label{non_dim.recursion},
	\end{align}
	
	(\pmb{B}) Supposing stepsize $\alpha_k\leq \min\{\frac{1}{L_x},\frac{1}{3\mu_x}\}$, we have
	\begin{align}
		U_1(k+1)\leq&(1-\frac{3}{2}\alpha_k\mu_x)U_1(k)+\frac{6\alpha_kL_x^2}{n\mu_x}V_1(k)+\frac{6\alpha_kL_{\theta}^2}{n\mu_x}V_2(k)\notag\\
		&+\alpha_k^2\left(\frac{3M_xL_x^2}{n^2}\mathbb{E}[||\pmb{\mathbf{x}}(k)-\pmb{1}x_*^T||^2]+\frac{3M_xL_{\theta}^2}{n^2}\mathbb{E}[||\pmb{\theta}(k)-\pmb{1}\theta_*^T||^2]+\frac{\bar{M}}{n}\right)\label{u_recursion}.
	\end{align}
\end{lemma1}
%

Result \pmb{B} restricts the stepsize to smaller one than  that of Result \pmb{A}. 
It thus simplifies Result \pmb{A} \cref{non_dim.recursion} by removing the cross term to facilitate the later analysis.
We can revisit Inequality \cref{u_recursion} and reformulate it as $U_1(k+1)\leq (1-\frac{3}{2}\alpha_k\mu_x)U_1(k)+error(\alpha_k)$, where $error(\alpha)$ means an error function that is proportional to $\alpha$. We should mention that, since $\alpha_k>0$ and $\mu_x>0$, expected optimization error $U_1(k)$  roughly shrinks by a ratio $(1-\frac{3}{2}\alpha_k\mu_x)<1$. Though there is an error term related to $\alpha_k$,  when we choose diminishing stepsize policy and the consensus errors $V_1(k), V_2(k)$ as well as $\mathbb{E}(||\pmb{\mathbf{x}}(k)-\pmb{1}x_*^T||^2)$,  $\mathbb{E}(||\pmb{\theta}(k)-\pmb{1}\theta_*^T||^2)$ are bounded, the error may decrease to $0$, which indicates the convergence of $U_1(k)$.

We define
\begin{equation}
	\nabla_x \boldsymbol{F}( \pmb{\mathbf{x}},\pmb{\theta})\triangleq[\nabla_x f_1(x_1,\theta_1),\nabla_x f_2(x_2,\theta_2),\cdots,\nabla_x f_n(x_n,\theta_n)]^T\in \mathbb{R}^{n\times p}\label{f_vec}.
\end{equation} In the next lemma, we will show the recursive formulation of expected consensus error $V_1(k)$, which is  critical for convergence analysis. For completeness, we give its proof in \cref{appE}.
\begin{lemma1}\label{55.7}
	Let Assumption \ref{assump. func_pro}$\sim$\ref{assump. graph} hold,  and consider  \cref{alg:CDSA}. Then for any $k\geq0$, we have
	\begin{align}
		V_1(k+1)\leq&\frac{3+\rho_w^2}{4}V_1(k)+\alpha_k^2\rho_w^2n\sigma_x^2+3\alpha_k^2\rho_w^2\left(\frac{3}{1-\rho_w^2}+M_x\right)\left(L_x^2\mathbb{E}[||\pmb{\mathbf{x}}(k)-\pmb{1}x_*^T||^2]\right.\notag\\
		&\left.+L_{\theta}^2\mathbb{E}[||\pmb{\mathbf{\theta}}(k)-\pmb{1}\theta_*^T||^2]+||\nabla_x\boldsymbol{F}(\pmb{1}x_*^T,\pmb{1}\theta_*^T)||^2\right)\label{v_recursion}.
	\end{align}
\end{lemma1}

The recursion of expected consensus error can be reformulate as $V_1(k+1)\leq \frac{3+\rho_w^2}{4}V_1(k)+error(\alpha_k^2\rho_w^2)$. It is worth mentioning that  $V_1(k)$ can roughly shrink by $\frac{3+\rho_w^2}{4}<1$ since $\rho_w<1$.  Note that the extra error term in the consensus error is proportional to $\alpha_k^2$,
compared to $U_1(k)$ with an error term  proportional to $\alpha_k$.
We might obtain a qualitative conclusion that expected consensus error decrease faster than expected optimization error. We will present the precise proof in the next part that consensus error decrease to $0$ at an order $\mathcal{O}(\frac{1}{k^2})$ while optimization error at $\mathcal{O}(\frac{1}{k})$.

\newtheorem{remark}{Remark}
\begin{remark} Recalling recursion of $U_1(k)$ in (\ref{u_recursion}) and recursion of $V_1(k)$ in (\ref{v_recursion}), we could  notice that the expected consensus error is more related to the network connectivity  $\rho_w$, which is natural because ``consensus" is induced from the distributed algorithm, while ``optimization" mainly comes from original optimization method such as stochastic gradient descent.
\end{remark}
\vspace{-10pt}
\subsection{Uniform Bound}
\label{uni.bound_stepsize}
From now on, we consider stepsize policy as follows
\begin{equation}
	\alpha_k \triangleq \frac{\beta}{\mu_x(k+K)},
	\gamma_k \triangleq \frac{\beta}{\mu_\theta(k+K)}, \quad \forall k,\label{5.29}
\end{equation}
where the $\beta$ is a positive constant , and
\begin{equation}
	K \triangleq \Bigg{\lceil}\max\left\{\frac{3\beta(1+M_x)L_x^2}{\mu_x^2}, \frac{3\beta(1+M_{\theta})L_{\theta}^2}{\mu_{\theta}^2}\right\}\Bigg{\rceil}\label{5.30}
\end{equation}
with $\lceil\cdot\rceil$ denoting the ceiling function.

Next, We present a lemma that derives a uniform bound on the iterates $\{ \pmb{\theta}(k)\},\{ \pmb{x}(k)\}$ generated by \cref{alg:CDSA}. Such a result is helpful for bounding the expected optimization error and expected consensus error.
\begin{lemma1}\label{55.8}
	Let Assumption \ref{assump. func_pro}$\sim$\ref{assump. graph} hold. Consider  \cref{alg:CDSA}  with stepsize policy (\ref{5.29}). We then obtain   from  \cite[Lemma 8]{pu2021sharp} that for all $k\geq0$,
	\begin{equation}
		\mathbb{E}[\|\theta_i(k)-\theta_*\|^2] \leq \hat{\Theta}_i\triangleq\max\left\{\|\theta_i(0)-\theta_*\|^2,\frac{9\|\nabla h_i(\theta_*)\|}{\mu_{\theta}}^2+\frac{\sigma_{\theta}^2}{(1+M_{\theta})L_{\theta}^2}\right\}\label{theta_bound}.
	\end{equation}
	Based on \eqref{theta_bound}, we can obtain the following result with $\hat{\Theta}\triangleq \sum\nolimits_{i=1}^{n}\hat{\Theta}_i$,
	\begin{align}
		\mathbb{E}[||\pmb{\mathbf{x}}(k)-\pmb{1}x_*^T||^2]&\leq \hat{X}, {\rm~where ~} \label{bd-xk}
		\\ 
		\hat{X} \triangleq \max \Bigg{\{}||\pmb{\mathbf{x}}(0)-\pmb{1}x_*^T||^2,&\frac{11L_{\theta}^2 \hat{\Theta}}{\mu_x^2}
		+\frac{11||\nabla_x\boldsymbol{F}(\pmb{1}x_*^T, \pmb{1}\theta_*^T)||^2}{\mu_x^2}+\frac{7n\sigma_x^2}{9(1+M_x)L_x^2}\Bigg{\}}\label{5.31}.
	\end{align}
\end{lemma1}

We will give the proof of \eqref{bd-xk} in \cref{appC}.
Lemma \ref{55.8} indicates that although the problem we consider is unconstrained, the gap between the iterates generated by algorithm CDSA and the optimal solution is uniformly  bounded. It is critical for the analysis of sublinear convergence rates of $U_1(k)$ and $V_1(k)$. Then  based on this lemma, we  will provide uniform upper bounds for the expected optimization error and expected consensus error.

\begin{lemma1}\label{55.9}
	Let Assumption \ref{assump. func_pro}$\sim$\ref{assump. graph} hold.
	Consider   \cref{alg:CDSA}  with stepsize policy (\ref{5.29}). We then have
	$ U_1(k)\leq\frac{\hat{X}}{n},
	V_1(k)\leq4\hat{X}.$
\end{lemma1}
\newenvironment{proof}{{\noindent\it Proof}\quad}{\hfill $\square$}
\begin{proof} By recalling \eqref{bd-xk} and using Cauchy-Schiwaz inequality, we obtain that
	\begin{align*} U_1(k)&=\mathbb{E}[||\bar{x}(k)-x_*||^2]=
		\mathbb{E}\left[\left \|\frac{1}{n}\sum\nolimits_{i=1}^{n}x_i(k)-\frac{1}{n}\sum\nolimits_{i=1}^{n}x_*\right\|^2 \right]  \notag\\
		&=\frac{1}{n^2}\mathbb{E}\left[\left \|\sum\nolimits_{i=1}^{n}(x_i(k)-x_*)\right\|^2 \right]
		\leq\frac{1}{n^2}\times n\mathbb{E}[||\pmb{\mathbf{x}}(k)-\pmb{1}x_*^T||^2] \leq\frac{\hat{X}}{n},\notag\\
		V_1(k)&=\mathbb{E}[||\pmb{\mathbf{x}}(k)-\pmb{1}\bar{x}(k)^T||^2] =\mathbb{E}[||\pmb{\mathbf{x}}(k)-\pmb{1}x_*^T+\pmb{1}x_*^T-\pmb{1}\bar{x}(k)^T||^2]\notag\\
		&\leq2\mathbb{E}[||\pmb{\mathbf{x}}(k)-\pmb{1}x_*^T||^2]+2\mathbb{E}[||\pmb{1}(x_*-\bar{x}(k)||^2)]\\
		&\leq2\hat{X}+2n\times\frac{\hat{X}}{n}
		\leq 4\hat{X}.
	\end{align*}
\end{proof}
\vspace{-15pt}
\section{Main Results}
\label{sec:mainreslut}
In this section, we will make full use of previous results and then give a precise convergence rate analysis of  \cref{alg:CDSA}. The elaboration will be divided into three parts.
Firstly, we respectively establish the $\mathcal{O}(\frac{1}{k})$ and $\mathcal{O}(\frac{1}{k^2})$  convergence rate of $U_1(k)=\mathbb{E}[||\bar{x}(k)-x_*||^2]$
and $V_1(k)=\mathbb{E}[||\pmb{\mathbf{x}}(k)-\pmb{1}\bar{x}(k)^T||^2]$ based on two supporting lemmas in section \ref{supprotlemma}. Secondly, we show that the convergence rate, measured  by the mean-squared error of the decision variables,  is as follows.
\begin{align*}
	\frac{1}{n}\sum\nolimits_{i=1}^{n}\mathbb{E}[||x_i(k)-x_*||^2]\leq \frac{\beta^2\bar{M}}{(2\beta-1)n\mu_x^2(k+K)}+\frac{\mathcal{O}\left(\frac{1}{\sqrt{n}(1-\rho_w)}\right)}{(k+K)^{1.5}}+\frac{\mathcal{O}\left(\frac{1}{(1-\rho_w)^2}\right)}{(k+K)^2},
\end{align*}
where the first term is only concerned with the stochastic gradient descent  method which is network independent, while the higher-order depends on $(1-\rho_w)$. Finally, we characterize the transient time needed for CDSA to approach the asymptotic convergence rate is $\mathcal{O}\left(\frac{n}{(1-\rho_w)^2}\right)$. 
\subsection{Sublinear Convergence}\label{subconvergence}
We first prove that the expected consensus error  $V_1(k) = \mathbb{E}[||\pmb{\mathbf{x}}(k)-\pmb{1}\bar{x}(k)^T||^2] $ decays with rate $V_1(k)=\mathcal{O}(\frac{1}{k^2})$.

\begin{lemma1}\label{55.10}
	Let Assumption \ref{assump. func_pro}$\sim$\ref{assump. graph} hold.
	Consider \cref{alg:CDSA} with stepsize (\ref{5.29}). Recall the definitions of $K$.  Define
	\begin{align}
		\nabla \pmb{H} (\boldsymbol{\theta})&\triangleq \left[\nabla h_1(\theta_1), \nabla h_2(\theta_2),\cdots \nabla h_n(\theta_n)\right]\in \mathbb{R}^{n\times q}, \label{nable_H}\\
		K_1 &\triangleq \Big{\lceil}\max\Big{\{}2K,\frac{16}{1-\rho_w^2}\Big{\}}\Big{\rceil}\label{K_1}.
	\end{align}
	We then obtain from \cite[Lemma 10]{pu2021sharp} that  for any $k\geq K_1-K$, 
	\begin{align}
		V_2(k)&\leq\frac{\hat{V}_2}{(k+K)^2} {\rm~with~}	\hat{V}_2\triangleq\max\Big{\{}K_1^2\hat{\Theta},\frac{8\beta^2\rho_w^2c_4^{'}}{\mu_{\theta}^2(1-\rho_w^2)}\Big{\}}\label{hatV_2},\\
		{\rm~where~}	
		c_4^{'}\triangleq2&\left(\frac{3}{1-\rho_w^2}+M_{\theta}\right)\left(L_{\theta}^2\hat{\Theta}+\|\nabla \pmb{H}(\pmb{1}\theta_*^T)\|\right)+n\sigma_{\theta}^2\label{c_4p}.
	\end{align}
	Furthermore, we achieve that
	\begin{align}
		V_1(k)&\leq\frac{\hat{V}_1}{(k+K)^2}\label{V_order}
		{\rm~with~}
		\hat{V}_1 \triangleq \max \left\{4K_1^2\hat{X},\frac{8\beta^2\rho_w^2c_4}{\mu_x^2(1-\rho_w^2)}\right\},
		\\	{~\rm where~}	c_4 &\triangleq 3\left(\frac{3}{1-\rho_w^2}+M_x\right)\left(L_x^2\hat{X}+L_{\theta}^2\hat{\Theta}+||\nabla_x\boldsymbol{F}(\pmb{1}x_*^T,\pmb{1}\theta_*^T)||^2\right)+n\sigma_x^2\label{5.54}.
	\end{align}
\end{lemma1}
\begin{proof}
	We now prove \eqref{V_order}. From Lemma \ref{55.7} and \ref{55.8} it follows that
	\begin{equation}\label{recu-v1k}
		V_1(k+1)\leq\frac{3+\rho_w^2}{4}V_1(k)+\alpha_k^2\rho_w^2c_4,\quad \forall k\geq 0.
	\end{equation}
	We use induction method to  show that \eqref{V_order} holds   for any $k\geq K_1-K$.
	Recall from Lemma \ref{55.9}  that  $V_1(k) \leq 4\hat{X} $.
	Then for $k=K_1-K$, $ V_1(k) \leq \frac{ 4K_1^2\hat{X} }{K_1^2} =\frac{ 4K_1^2\hat{X} }{(k+K)^2} \leq   \frac{\hat{V}_1}{(k+K)^2}$ by the definition of $\hat{V}_1$ in (\ref{V_order}).
	Suppose that \eqref{V_order} holds for $k=\tilde{k}$. It suffices to show that \eqref{V_order} holds for $k=\tilde{k}+1$.
	
	
	Note from \eqref{K_1}  that  $\tilde{k}+K\geq \frac{16}{1-\rho_w^2}$ for any $\tilde{k}\geq K_1-K$.
	We then  have
	\begin{align*}
		\left(\frac{\tilde{k}+K}{\tilde{k}+K+1}\right)^2-\frac{3+\rho_w^2}{4}&
		=1-\frac{2}{\tilde{k}+K+1}+\frac{1}{(\tilde{k}+K+1)^2}-\frac{3+\rho_w^2}{4}\\
		&\geq\frac{1-\rho_w^2}{4}  -\frac{2}{\tilde{k}+K}  \geq \frac{1-\rho_w^2}{8}.
	\end{align*}
	Divide both sides of above inequality by $\frac{\beta^2\rho_w^2c_4}{\mu_x^2}$. Recalling the  definition of $\hat{V}_1$ in \eqref{V_order}, we have
	\begin{equation} 
			\frac{\beta^2\rho_w^2c_4}{\mu_x^2}\left(\left(\frac{\tilde{k}+K}{\tilde{k}+K+1}\right)^2-\frac{3+\rho_w^2}{4}\right)^{-1}\leq \frac{8\beta^2\rho_w^2c_4}{\mu_x^2(1-\rho_w^2)}\leq\hat{V}_1.
	\end{equation}
	This implies that
	\begin{equation}
		\frac{3+\rho_w^2}{4}\frac{\hat{V}_1}{(\tilde{k}+K)^2}+
		\frac{\beta^2\rho_w^2c_4}{\mu_x^2} \frac{1}{(\tilde{k}+K)^2} \leq\frac{\hat{V}_1}{(\tilde{k}+K+1)^2}. 
	\end{equation}
	Then by using \eqref{recu-v1k} and the definition of $\alpha_k$ in \eqref{5.29}, we derive that
	$V_1(\tilde{k}+1) \leq \frac{\hat{V}_1}{(\tilde{k}+K+1)^2},$ i.e.,
	\eqref{V_order} holds for $k=\tilde{k}+1$. Then   the  lemma  is proved.
\end{proof}


In light of Lemma \ref{55.10} and other auxiliary results, we establish the $\mathcal{O}(\frac{1}{k})$ convergence rate of expected optimization error $U_1(k)=\mathbb{E}[||\bar{x}(k)-x_*||^2]$ in the following lemma.

\begin{lemma1}\label{55.12}
	Let Assumption \ref{assump. func_pro}$\sim$\ref{assump. graph} hold.
	Consider \cref{alg:CDSA} with stepsize (\ref{5.29}), where $\beta>2$. We then have
	\begin{equation}
		\begin{aligned}
			U_1(k)\leq& \frac{\beta^2c_5}{(1.5\beta-1)n\mu_x^2(k+K)}+\frac{(K_1+K)^{1.5\beta}}{(k+K)^{1.5\beta}}\frac{\hat{X}}{n}
			\notag\\&
			+\left[\frac{3\beta^2(1.5\beta-1)c_5}{(1.5\beta-2)n\mu_x^2}+\frac{12\beta L_x^2\hat{V}_1}{(1.5\beta-2)n\mu_x^2}+\frac{12\beta L_{\theta}^2\hat{V}_2}{(1.5\beta-2)n\mu_x^2}\right]\cdot\frac{1}{(k+K)^2}
		\end{aligned}
	\end{equation}
	for any $k\geq K_1-K$, where
	\begin{equation}
		c_5 \triangleq \frac{3M_xL_x^2}{n}\hat{X}+\frac{3M_xL_{\theta}^2}{n}\hat{\Theta}+\bar{M}\label{5.63},
	\end{equation}
	$\hat{X}, K_1,\hat{V}_2, \hat{V}_1$, $\bar{M}$ are defined by (\ref{5.31}) (\ref{K_1}) (\ref{hatV_2}) (\ref{V_order}) (\ref{barM}) respectively.
\end{lemma1}
\begin{proof}
	Since $\alpha_k=\frac{\beta}{\mu_x(k+K)}$ by \eqref{5.29}, recalling the definition of $K$ and $K_1$ in \eqref{5.30} and \eqref{K_1}, we can see that $\alpha_k\leq\frac{\beta}{\mu K_1}\leq\frac{\beta}{2\mu K}\leq\frac{\mu_x}{6(1+M_x)L_x^2}\leq\min\{\frac{1}{3\mu_x},\frac{1}{L_x}\}$. Then  Lemma \ref{55.5}(B) holds.
	Together with \ref{55.8} it follows that for any $k\geq K_1-K$,
	\begin{equation} 
		U_1(k+1)\leq(1-\frac{3}{2}\alpha_k\mu_x)U_1(k)
		+\frac{6\alpha_kL_x^2}{n\mu_x}V_1(k)+\frac{6\alpha_kL_{\theta}^2}{n\mu_x}V_2(k)+\frac{\alpha_k^2c_5}{n}.
	\end{equation}
	Recalling the definition of $\alpha_k=\frac{\beta}{\mu_x(k+K)}$, we have
	\begin{equation}
		U_1(k+1)\leq (1-\frac{3\beta}{2(k+K)})U_1(k)+\frac{6\beta L_x^2V_1(k)}{n\mu_x^2(k+K)}+\frac{6\beta L_{\theta}^2V_2(k)}{n\mu_x^2(k+K)}+\frac{\beta^2c_5}{n\mu_x^2}\cdot\frac{1}{(k+K)^2}.
	\end{equation}
	Thus
	\begin{equation}
		\begin{aligned}
			&U_1(k)\leq \prod\nolimits_{t=K_1+K}^{k+K-1}(1-\frac{3\beta}{2t})U_1(K_1)\notag\\
			&+\sum\nolimits_{t=K_1+K}^{k+K-1} \prod\nolimits_{j=t+1}^{k+K-1}(1-\frac{3\beta}{2j}) \left(\frac{6\beta L_x^2}{n\mu_x^2}\cdot\frac{V_1(t-K)}{t}+\frac{6\beta L_{\theta}^2}{n\mu_x^2}\cdot\frac{V_2(t-K)}{t}+\frac{\beta^2c_5}{n\mu_x^2}\cdot\frac{1}{t^2}\right).
		\end{aligned}
	\end{equation}
	Recall from \cite[lemma 11]{pu2021sharp} that
	for any $\forall 1<j<k, j\in\mathbb{N}$ and $1<\gamma\leq j/2$, $\prod\nolimits_{t=j}^{k-1}(1-\frac{\gamma}{t})\leq\frac{j^{\gamma}}{k^{\gamma}}$. Then we achieve
	\begin{equation}
		\begin{aligned}
			&U_1(k)\leq  \frac{(K_1+K)^{1.5\beta}}{(k+K)^{1.5\beta}}U_1(K_1)\notag\\
			&+\sum\nolimits_{t=K_1+K}^{k+K-1}\frac{(t+1)^{1.5\beta}}{(k+K)^{1.5\beta}}\left(\frac{6\beta L_x^2}{n\mu_x^2}\cdot\frac{V_1(t-K)}{t}+\frac{6\beta L_{\theta}^2}{n\mu_x^2}\cdot\frac{V_2(t-K)}{t}+\frac{\beta^2c_5}{n\mu_x^2}\cdot\frac{1}{t^2}\right)\notag\\
			&=\frac{(K_1+K)^{1.5\beta}}{(k+K)^{1.5\beta}}U_1(K_1)+\frac{6\beta L_{\theta}^2}{n\mu_x^2(k+K)^{1.5\beta}}\sum\nolimits_{t=K_1+K}^{k+K-1}\frac{(t+1)^{1.5\beta}V_2(t-K)}{t}+\notag\\
			&\frac{6\beta L_x^2}{n\mu_x^2(k+K)^{1.5\beta}}\sum\nolimits_{t=K_1+K}^{k+K-1}\frac{(t+1)^{1.5\beta}V_1(t-K)}{t}+\frac{\beta^2c_5}{n\mu_x^2(k+K)^{1.5\beta}}\sum\nolimits_{t=K_1+K}^{k+K-1}\frac{(t+1)^{1.5\beta}}{t^2}.\notag
		\end{aligned}
	\end{equation}
	In light of Lemma \ref{55.10}, we have $V_1(k-K)\leq\frac{\hat{V}_1}{k^2}$ and $V_2(k-K)\leq\frac{\hat{V}_2}{k^2}$  for any  $k\geq K_1-K$. Hence
	\begin{align}
		U_1(k)\leq &\frac{\beta^2c_5}{n\mu_x^2(k+K)^{1.5\beta}}\sum\nolimits_{t=K_1+K}^{k+K-1}\frac{(t+1)^{1.5\beta}}{t^2}
		+\frac{(K_1+K)^{1.5\beta}}{(k+K)^{1.5\beta}}U_1(K_1)\notag\\
		&+\frac{6\beta L_x^2\hat{V}_1}{n\mu_x^2(k+K)^{1.5\beta}}\sum\nolimits_{t=K_1+K}^{k+K-1}\frac{(t+1)^{1.5\beta}}{t^3}+\frac{6\beta L_{\theta}^2\hat{V}_2}{n\mu_x^2(k+K)^{1.5\beta}}\sum\nolimits_{t=K_1+K}^{k+K-1}\frac{(t+1)^{1.5\beta}}{t^3}.\label{tuoterUtemp}
	\end{align}
	By the proof in \cite[lemma 12]{pu2021sharp}, when $b>a\geq K_1$, we have
	\begin{equation}
		\sum\nolimits_{t=a}^{b}\frac{(t+1)^{1.5\beta}}{t^2}\leq \frac{b^{1.5\beta-1}}{1.5\beta-1}+\frac{3(1.5\beta-1)b^{1.5\beta-2}}{1.5\beta-2} ,~
		\sum\nolimits_{t=a}^{b}\frac{(t+1)^{1.5\beta}}{t^3}\leq \frac{2b^{1.5\beta-2}}{1.5\beta-2}.\label{t_squre_cube}
	\end{equation}
	Thus
	\begin{equation}
		\begin{aligned}
			U_1(k)\leq&\frac{\beta^2 c_5}{(1.5\beta-1)n\mu_x^2(k+K)}+\frac{3\beta^2(1.5\beta-1)c_5}{(1.5\beta-2)n\mu_x^2}\cdot\frac{1}{(k+K)^2}+\frac{(K_1+K)^{1.5\beta}}{(k+K)^{1.5\beta}}U_1(K_1)\\
			&+\frac{12\beta L_x^2 \hat{V}_1}{(1.5\beta-2)n\mu_x^2}\cdot\frac{1}{(k+K)^2}+\frac{12\beta L_{\theta}^2 \hat{V}_2}{(1.5\beta-2)n\mu_x^2}\cdot\frac{1}{(k+K)^2}.
		\end{aligned}
	\end{equation}
	Recalling Lemma \ref{55.9} yields the desired result.
\end{proof}


\subsection{Rate Estimate}\label{rate}
In this subsection, we will discuss the factors that affect the convergence rate of the algorithm, especially the network size $n$, the spectral gap $(1-\rho_w)$, the summation of initial optimization errors $ \sum\nolimits_{i=1}^{n}||x_i(0)-x_*||^2$ and consensus errors $\sum\nolimits_{i=1}^{n}||\theta_i(0)-\theta_*||^2$, and the heterogenous of  computational functions and learning functions characterized by $\sum\nolimits_{i=1}^{n}||\nabla_xf_i(x_*,\theta_*)||^2$ and $\sum\nolimits_{i=1}^{n}||\nabla h_i(\theta_*)||^2$.
Firstly, we bound the constants appearing in Lemmas \ref{55.10} and \ref{55.12} by the  aforementioned  factors.  We then  utilize them to simplify the sublinear  rate of the expected optimization error, based on which,  we can improve the convergence rate and derive the main result for Algorithm  \ref{alg:CDSA}.


\begin{lemma1}\label{rate1}
	Denote $A_1 \triangleq \sum\nolimits_{i=1}^{n}||x_i(0)-x_*||^2, B_1 \triangleq \sum\nolimits_{i=1}^{n}||\nabla_xf_i(x_*;\theta_*)||^2,\\ A_2 \triangleq \sum\nolimits_{i=1}^{n}||\theta_i(0)-\theta_*||^2,$ and $ B_2 \triangleq \sum\nolimits_{i=1}^{n}||\nabla h_i(\theta_*)||^2$. Then the orders of  constants' $\hat{X}$ (\ref{5.31}), $\hat{\Theta}$ (\ref{theta_bound}), $\hat{V}_1$ (\ref{V_order}), $\hat{V}_2$ (\ref{hatV_2}), $c_4$ (\ref{5.54}) and $c_5$ (\ref{5.63}) are    as follow.
	\begin{align*}
		\hat{X}=\mathcal{O}(A_1+A_2+B_1+B_2+n),&\quad \hat{\Theta}=\mathcal{O}(A_2+B_2+n),\\
		\hat{V}_1=\mathcal{O}\left(\frac{A_1+A_2+B_1+B_2+n}{(1-\rho_w)^2}\right),&\quad\hat{V}_2=\mathcal{O}\left(\frac{A_2+B_2+n}{(1-\rho_w)^2}\right),\\
		c_4=\mathcal{O}\left(\frac{A_1+A_2+B_1+B_2+n}{1-\rho_w}\right),&\quad c_5=\mathcal{O}\left(\frac{A_1+A_2+B_1+B_2+n}{n}\right).
	\end{align*}
\end{lemma1}
\begin{proof}
	The upper bound of $\hat{\Theta}$ and $\hat{V}_2$ deal only with unknown parameter $\theta$, which can be inherited directly from \cite[lemma 13]{pu2021sharp}. As for $\hat{X}$, recalling (\ref{5.31}) we have
	\begin{equation}
		\begin{aligned} \hat{X}\leq&||\pmb{\mathbf{x}}(0)-\pmb{1}x_*^T||^2+\frac{11L_{\theta}^2\hat{\Theta}}{\mu_x^2}
			+\frac{11||\nabla_xF(\pmb{1}x_*^T,\pmb{1}\theta_*^T)||^2}{\mu_x^2}+
			\frac{7n\sigma_x^2}{9(1+M_x)L_x^2}\\
			=&\mathcal{O}(A_1+A_2+B_1+B_2+n).
		\end{aligned}
	\end{equation}
	From the definition of $c_4$ in (\ref{5.54}), it follows that
	\begin{equation}
		c_4=3\left(\frac{3}{1-\rho_w^2}+M_x\right)(L_x^2\hat{X}+L_{\theta}^2\hat{\Theta}+||\nabla_xF(\pmb{1}x_*^T,\pmb{1}\theta_*^T)||^2)+n\sigma_x^2=\mathcal{O}\left(\frac{A_1+A_2+B_1+B_2+n}{1-\rho_w}\right)
	\end{equation}
	Note from  \eqref{K_1} and \eqref{5.30} that $K_1=\mathcal{O}\left(\frac{1}{1-\rho_w}\right)$. Then by  \eqref{V_order} , we obtain
	\begin{equation}
		\hat{V}_1=\max \left \{4K_1^2\hat{X},\frac{8\beta^2\rho_w^2c_4}{\mu_x^2(1-\rho_w^2)}\right \}=\mathcal{O}\left(\frac{A_1+A_2+B_1+B_2+n}{(1-\rho_w)^2}\right).
	\end{equation}
	In light of equation (\ref{5.63}), we can achieve
	\begin{equation}
		c_5=\frac{3M_xL_x^2}{n}\hat{X}+\frac{3M_xL_{\theta}^2}{n}\hat{\Theta}+\bar{M_x}=\mathcal{O}\left(\frac{A_1+A_2+B_1+B_2+n}{n} \right).
	\end{equation}
\end{proof}

The simplification of these constants makes it convenient for later analysis. In light of relation (\ref{V_order}), since $\hat{V}_1$ is the only constant, the convergence result of expected consensus error $V_1(k)$ can be easily obtained. While the expected optimization error $U_1(k)$ needs to be reformulated more concisely.
\newtheorem{corollary1}{Corollary}[section]
\begin{corollary1}\label{coro1}
	Let Assumption \ref{assump. func_pro}$\sim$\ref{assump. graph} hold. Consider \cref{alg:CDSA} with stepsize policy (\ref{5.29}), where $\beta>2$.  Then   we
	obtain from \cite[Corollary 1]{pu2021sharp} that
	\begin{align*}
		U_2(k)\leq \frac{\beta^2c_5'}{(1.5\beta-1)n\mu_x^2}\cdot\frac{1}{(k+K)}+\frac{c_6'}{(k+K)^2}, \quad \forall k\geq K_1-K,
	\end{align*}
	where $c_5'\triangleq \frac{2M_{\theta}L_{\theta}^2}{n}\hat{\Theta}+\frac{2M_{\theta}\sum\nolimits_{i=1}^{n}\|\nabla h_i(\theta_*)\|^2}{n}+\sigma_{\theta}^2$, $c_6'=O\left(\frac{A_2+B_2+n}{n(1-\rho_w)^2}\right).$
	Based on which, we further have that for any $k\geq K_1-K,$
	\begin{equation}
		U_1(k)\leq \frac{\beta^2c_5}{(1.5\beta-1)n\mu_x^2}\cdot\frac{1}{(k+K)}+\frac{c_6}{(k+K)^2},
	\end{equation}
	where $c_5$ is defined in (\ref{5.63}), and $c_6=O\left(\frac{A_1+A_2+B_1+B_2+n}{n(1-\rho_w)^2}\right)$.
\end{corollary1}
\begin{proof}
	In light of Lemma \ref{55.12} and Lemma \ref{rate1}, we can obtain that
	\begin{align*}
		U_1(k)&\leq\frac{\beta^2c_5}{(1.5\beta-1)n\mu_x^2(k+K)}+\frac{(K_1+K)^{1.5\beta-2}}{(k+K)^{1.5\beta-2}}\frac{\hat{X}}{n}\cdot\frac{1}{(k+K)^2}\notag\\
		&\qquad+\left[\frac{3\beta^2(1.5\beta-1)c_5}{(1.5\beta-2)n\mu_x^2}+\frac{12\beta L_x^2\hat{V}_1}{(1.5\beta-1)n\mu_x^2}+\frac{12\beta L_{\theta}^2\hat{V}_2}{(1.5\beta-1)n\mu_x^2}\right]\cdot\frac{1}{(k+K)^2}\\
		&=\frac{\beta^2c_5}{(1.5\beta-1)n\mu_x^2}\cdot\frac{1}{(k+K)}+\mathcal{O}\left(\frac{A_1+A_2+B_1+B_2+n}{n}\right)\frac{1}{(k+K)^2}\\
		&\quad+\left[\mathcal{O}\left(\frac{A_1+A_2+B_1+B_2+n}{n^2} \right)+\mathcal{O}\left(\frac{A_1+A_2+B_1+B_2+n}{n(1-\rho_w)^2}\right)+\mathcal{O}\left(\frac{A_2+B_2+n}{n(1-\rho_w)^2}\right)\right]\frac{1}{(k+K)^2}\\
		&\leq\frac{\beta^2c_5}{(1.5\beta-1)n\mu_x^2}\cdot\frac{1}{(k+K)}+\mathcal{O}\left(\frac{A_1+A_2+B_1+B_2+n}{n(1-\rho_w)^2}\right)\frac{1}{(k+K)^2}.
	\end{align*}
\end{proof}

Based on this corollary, together with Lemma \ref{55.5}, we further elaborate the convergence result of Algorithm \ref{alg:CDSA}. 
Especially, we give an upper bound of $\frac{1}{n} \sum\nolimits_{i=1}^n \mathbb{E}\left[\left\|x_i(k)-x_*\right\|^2\right]$ and formulate it in a way to make  an intuitive comparison with the centralized algorithm.
\newtheorem{theory}{Theorem}[section]
\begin{theory}
	Let Assumption \ref{assump. func_pro}$\sim$\ref{assump. graph} hold. Consider  \cref{alg:CDSA} with stepsize policy (\ref{5.29}), where $\beta>2$. Then for any  $k\geq K_1-K$, we have
	\begin{align}
		&	\frac{1}{n}\sum\nolimits_{i=1}^{n}\mathbb{E}[||x_i(k)-x_*||^2]\leq \frac{\beta^2\bar{M}}{(2\beta-1)n\mu_x^2(k+K)}\notag\\		
		&\quad+\mathcal{O}\left(\frac{A_1+A_2+B_1+B_2+n}{n\sqrt{n}(1-\rho_w)}\right)\frac{1}{(k+K)^{1.5}}+\mathcal{O}\left(\frac{A_1+A_2+B_1+B_2+n}{n(1-\rho_w)^2} \right)\frac{1}{(k+K)^2},\label{mainreslut}
	\end{align}
	where $\bar{M}$ is defined in (\ref{barM}).
\end{theory}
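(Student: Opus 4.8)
The plan is to start from the exact identity $\frac{1}{n}\sum_{i=1}^{n}\mathbb{E}[\|x_i(k)-x_*\|^2]=U_1(k)+\frac{1}{n}V_1(k)$, so that it suffices to bound $U_1(k)$ by the first two terms of \eqref{mainreslut} and to push $\frac1n V_1(k)$ into the last term. The latter is immediate from Lemma~\ref{55.10} and Lemma~\ref{rate1}: $\frac1n V_1(k)\le \frac{\hat V_1}{n(k+K)^2}=\mathcal{O}\!\big(\frac{A_1+A_2+B_1+B_2+n}{n(1-\rho_w)^2}\big)\frac{1}{(k+K)^2}$. So the real work is to sharpen the rate of $U_1(k)$ from the coarse $\mathcal{O}(c_5/(n(k+K)))$ bound of Corollary~\ref{coro1} (whose leading constant carries $c_5$ and whose decay exponent is $1.5\beta$) to the claimed bound whose leading constant is $\frac{\beta^2\bar M}{(2\beta-1)n\mu_x^2}$.

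The key idea is a bootstrap. I would feed the \emph{already proven} rates $U_1(k),U_2(k)=\mathcal{O}(1/(k+K))$ (Corollary~\ref{coro1}), $V_1(k),V_2(k)=\mathcal{O}(1/(k+K)^2)$ (Lemma~\ref{55.10}), together with the constant orders of Lemma~\ref{rate1}, back into the \emph{unsimplified} recursion Lemma~\ref{55.5}(A) rather than into (B). In \eqref{non_dim.recursion} I would: (i) bound each of the three cross terms by Cauchy--Schwarz, e.g. $\frac{2\alpha_kL_x}{\sqrt n}(1-\alpha_k\mu_x)\mathbb{E}[\|\bar x(k)-x_*\|\,\|\pmb{\mathbf{x}}(k)-\pmb{1}\bar x(k)^T\|]\le \frac{2\alpha_kL_x}{\sqrt n}\sqrt{U_1(k)}\sqrt{V_1(k)}$ and similarly for the $L_\theta$/$V_2$ cross term and the $\alpha_k^2$ cross term; substituting the rates and $\sqrt{c_5\hat V_1},\sqrt{c_5\hat V_2}=\mathcal{O}\!\big(\frac{A_1+A_2+B_1+B_2+n}{\sqrt n(1-\rho_w)}\big)$ shows all these are $\mathcal{O}\!\big(\frac{A_1+A_2+B_1+B_2+n}{n\sqrt n(1-\rho_w)}\big)\frac{1}{(k+K)^{2.5}}$ plus strictly higher-order remainders; (ii) bound $\mathbb{E}[\|\pmb{\mathbf{x}}(k)-\pmb{1}x_*^T\|^2]\le 2nU_1(k)+2V_1(k)=\mathcal{O}(c_5/(k+K))$ and likewise for $\theta$, so the terms $\alpha_k^2\frac{3M_xL_x^2}{n^2}\mathbb{E}[\|\pmb{\mathbf{x}}(k)-\pmb{1}x_*^T\|^2]$ etc.\ become only $\mathcal{O}(1/(k+K)^3)$; (iii) keep intact \emph{both} the contraction factor $(1-\alpha_k\mu_x)^2$ and the term $\frac{\alpha_k^2\bar M}{n}=\frac{\beta^2\bar M}{n\mu_x^2(k+K)^2}$. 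Using that the stepsize condition of Lemma~\ref{55.5}(A) holds (as in the proof of Lemma~\ref{55.12}), $K_1\ge 2K$ and $K\ge 3\beta$ (so $K_1\ge 2\beta$ and Corollary~\ref{coro1}/Lemma~\ref{55.10} apply), this produces, for all $k\ge K_1-K$,
\[
U_1(k+1)\le (1-\alpha_k\mu_x)^2U_1(k)+\frac{\beta^2\bar M}{n\mu_x^2(k+K)^2}+\frac{D}{(k+K)^{2.5}}+\frac{D'}{(k+K)^{3}},
\]
with $D=\mathcal{O}\!\big(\frac{A_1+A_2+B_1+B_2+n}{n\sqrt n(1-\rho_w)}\big)$ and $D'=\mathcal{O}\!\big(\frac{A_1+A_2+B_1+B_2+n}{n(1-\rho_w)^2}\big)$.

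To unroll this recursion I would square the estimate $\prod_{t=j}^{k-1}(1-\beta/t)\le (j/k)^{\beta}$ of \cite[Lemma~11]{pu2021sharp} to get $\prod(1-\alpha_t\mu_x)^2\le (j/k)^{2\beta}$ --- this is exactly why the factor $(1-\alpha_k\mu_x)^2$ must be left unexpanded, as it upgrades the decay exponent to $2\beta$ and hence the leading constant from $1/(1.5\beta-1)$ (with $c_5$) to $1/(2\beta-1)$ (with $\bar M$). Iterating from $k=K_1-K$ and applying the summation bounds $\sum_{t=a}^{b}(t+1)^{2\beta}/t^2\le b^{2\beta-1}/(2\beta-1)+\mathcal{O}(b^{2\beta-2})$ and $\sum_{t=a}^{b}(t+1)^{2\beta}/t^{2.5}=\mathcal{O}(b^{2\beta-1.5})$ (the first as in \cite[Lemma~12]{pu2021sharp}), the $\frac{\beta^2\bar M}{n\mu_x^2}$-term gives the leading $\frac{\beta^2\bar M}{(2\beta-1)n\mu_x^2(k+K)}$, the $D/(k+K)^{2.5}$-term gives $\mathcal{O}(D)\frac{1}{(k+K)^{1.5}}$, and the transient term $\big(\tfrac{K_1}{k+K}\big)^{2\beta}U_1(K_1-K)$ (bounded via $U_1(K_1-K)\le\hat X/n$, $K_1=\mathcal{O}(1/(1-\rho_w))$, and $\beta>2$) together with the $D'/(k+K)^3$ remainder collapse into $\mathcal{O}\!\big(\frac{A_1+A_2+B_1+B_2+n}{n(1-\rho_w)^2}\big)\frac{1}{(k+K)^2}$; adding $\frac1n V_1(k)$ yields \eqref{mainreslut}. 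The main obstacle is steps (ii)--(iii): one must be disciplined about routing \emph{every} contribution that superficially looks like $\Theta(1/(k+K)^2)$ but carries a constant involving $\hat X$ or $\hat\Theta$ into strictly higher order ($1/(k+K)^{2.5}$ or $1/(k+K)^3$), because after summation any surviving $\Theta(1/(k+K)^2)$ term turns into a $\Theta(1/(k+K))$ term with an $\hat X$-dependent constant, which would prevent the dominant coefficient from collapsing to the clean centralized value $\beta^2\bar M/((2\beta-1)n\mu_x^2)$.
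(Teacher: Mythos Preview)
Your proposal is correct and follows essentially the same bootstrap argument as the paper: both start from the unsimplified recursion of Lemma~\ref{55.5}(A), bound the cross terms by Cauchy--Schwarz, feed in the already-proven rates for $U_1,U_2$ (Corollary~\ref{coro1}) and $V_1,V_2$ (Lemma~\ref{55.10}), and then unroll with contraction exponent $2\beta$ so that the $\alpha_k^2\bar M/n$ term produces the clean leading coefficient $\beta^2\bar M/((2\beta-1)n\mu_x^2)$. The only cosmetic differences are that you use the exact orthogonal decomposition $\frac{1}{n}\sum_i\mathbb{E}\|x_i(k)-x_*\|^2=U_1(k)+\frac{1}{n}V_1(k)$ (the paper uses the looser $\leq 2U_1(k)+\frac{2}{n}V_1(k)$) and keep $(1-\alpha_k\mu_x)^2$ intact when unrolling (the paper expands it to $1-2\alpha_k\mu_x$ and routes the leftover $\alpha_k^2\mu_x^2U_1(k)$ into the $\mathcal{O}(1/(k+K)^3)$ bucket); neither change affects the structure or the outcome.
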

\begin{proof}
	For $k\geq K_1-K$, by recalling Lemma \ref{55.5}(\pmb{A}) and the definition of $U_1(k),V_1(k)$ and $U_2(k), V_2(k)$ in (\ref{U_1V_1}) and (\ref{U_2V_2}), we have
	\begin{align*}
		&U_1(k+1)
		\leq(1-\alpha_k\mu_x)^2U_1(k)+\frac{\alpha_k^2L_x^2}{n}V_1(k)+\frac{\alpha_k^2L_{\theta}^2}{n}V_2(k)+\frac{2L_xL_{\theta}\alpha_k^2}{n}\sqrt{V_1(k)V_2(k)}\notag\\
		&\qquad+\frac{2\alpha_kL_x}{\sqrt{n}}\sqrt{U_1(k)V_1(k)}+\frac{2\alpha_kL_{\theta}}{\sqrt{n}}\sqrt{U_1(k)V_2(k)}\notag\\
		&\qquad+\alpha_k^2\left(\frac{3M_xL_x^2}{n^2}(nU_1(k)+V_1(k))+\frac{3M_xL_{\theta}^2}{n^2}(nU_2(k)+V_2(k))+\frac{\bar{M}}{n}\right)\\
		&=(1-2\alpha_k\mu_x)U_1(k)+\alpha_k^2\left(\mu_x^2+\frac{3M_xL_x^2}{n}\right)U_1(k)+\alpha_k^2\cdot\frac{3M_xL_{\theta}^2}{n}U_2(k)\\
		&\qquad+\frac{\alpha_k^2L_x^2}{n}\left(1+\frac{3M_x}{n}\right)V_1(k)+\frac{\alpha_k^2L_{\theta}^2}{n}\left (1+\frac{3M_x}{n}\right)V_2(k) +\frac{2L_xL_{\theta}\alpha_k^2}{n}\sqrt{V_1(k)V_2(k)}\\
		&\qquad+\frac{2\alpha_kL_x}{\sqrt{n}}\sqrt{U_1(k)V_1(k)}+\frac{2\alpha_kL_{\theta}}{\sqrt{n}}\sqrt{U_1(k)V_2(k)}+\frac{\alpha_k^2 \bar{M}}{n},
	\end{align*}
	where the first inequality follows the Cauchy-Schwarz inequality in the probabilistic form and the fact that
	\begin{align}\label{bd-mse}
		\mathbb{E}[||\pmb{\mathbf{x}}(k)-\pmb{1}x_*^T||^2]&=\mathbb{E}\left[\|\mathbf{x}(k)-\pmb{1}\bar{x}^T+\pmb{1}\bar{x}^T-\pmb{1}x_*^T\|^2\right] \notag
		\\& \leq 2\mathbb{E}\left[\|\mathbf{x}(k)-\pmb{1}\bar{x}^T\|^2\right]+2\mathbb{E}\left[\|\pmb{1}\bar{x}^T-\pmb{1}x_*^T\|^2\right] \\
		&=2\mathbb{E}\left[\|\mathbf{x}(k)-\pmb{1}\bar{x}^T\|^2\right]+2n\mathbb{E}\left[\| \bar{x} - x_* \|^2\right] =2V_1(k)+2nU_1(k)\notag.
	\end{align}
	Thus, due to $\alpha_k=\frac{\beta}{\mu_x(k+K)}$, we have
	\begin{align*}
		U_1&(k+1)	\leq\left(1-\frac{2\beta}{k+K}\right)U_1(k)+\frac{\beta^2U_1(k)}{(k+K)^2}\left(1+\frac{3M_xL_x^2}{n\mu_x^2}\right)+\frac{3M_xL_{\theta}^2\beta^2U_2(k)}{n\mu_x^2(k+K)^2}\\
		&\qquad+\frac{\beta^2L_x^2}{n\mu_x^2}\left(1+\frac{3M_x}{n}\right)\frac{V_1(k)}{(k+K)^2}+\frac{\beta^2L_{\theta}^2}{n\mu_x^2}\left(1+\frac{3M_x}{n}\right)\frac{V_2(k)}{(k+K)^2}\\
		&+\frac{2L_xL_{\theta}\beta^2}{n\mu_x^2}\frac{\sqrt{V_1(k)V_2(k)}}{(k+K)^2}+\frac{2\beta L_x}{\sqrt{n}\mu_x}\frac{\sqrt{U_1(k)V_1(k)}}{k+K}+\frac{2\beta L_{\theta}}{\sqrt{n}\mu_x}\frac{\sqrt{U_1(k)V_2(k)}}{k+K}+\frac{\beta^2\bar{M}}{n\mu_x^2}\frac{1}{(k+K)^2}
	\end{align*}
	Denote by $c_7=1+\frac{3M_xL_x^2}{n\mu_x^2}$ and  $c_8=1+\frac{3M_x}{n}$ . Then in light of \cite[Lemma 11]{pu2021sharp}, we obtain that
	\begin{equation}
		\begin{aligned}
			&U_1(k)\leq\prod\nolimits_{t=K_1+K}^{k+K-1}\left(1-\frac{2\beta}{t}\right)U_1(K_1)+\sum\nolimits_{t=K_1+K}^{k+K-1}\left(\prod\nolimits_{i=t+1}^{k+K-1}\left(1-\frac{2\beta}{i}\right)\right)\Bigg[\frac{\beta^2\bar{M}}{n\mu_x^2t^2}\\
			&\quad+\frac{3M_xL_{\theta}^2\beta^2}{n\mu_x^2}\frac{U_2(t-K)}{t^2}+\frac{\beta^2L_x^2c_8}{n\mu_x^2}\frac{V_1(t-K)}{t^2}+\frac{\beta^2L_{\theta}^2c_8}{n\mu_x^2}\frac{V_2(t-K)}{t^2}+\frac{2L_xL_{\theta}\beta^2}{n\mu_x^2}\frac{\sqrt{V_1(t-K)V_2(t-K)}}{t^2}\\
			&\quad+\frac{\beta^2c_7U_1(t-K)}{t^2}+\frac{2\beta L_x}{\sqrt{n}\mu_x}\frac{\sqrt{U_1(t-K)V_1(t-K)}}{t}+\frac{2\beta L_{\theta}}{\sqrt{n}\mu_x}\frac{\sqrt{U_1(t-K)V_2(t-K)}}{t}\Bigg]\\
			&\leq
			\frac{(K_1+K)^{2\beta}}{(k+K)^{2\beta}}U_1(K_1)+\sum\nolimits_{t=K_1+K}^{k+K-1}\frac{(t+1)^{2\beta}}{(k+K)^{2\beta}}\Bigg[\frac{\beta^2\bar{M}}{n\mu_x^2t^2}+\frac{\beta^2c_7U_1(t-K)}{t^2}\\
			&\qquad+\frac{\beta^2L_x^2c_8}{n\mu_x^2}\frac{V_1(t-K)}{t^2}+\frac{\beta^2L_{\theta}^2c_8}{n\mu_x^2}\frac{V_2(t-K)}{t^2}+\frac{2L_xL_{\theta}\beta^2}{n\mu_x^2}\frac{\sqrt{V_1(t-K)V_2(t-K)}}{t^2}\\
			&\qquad\qquad+\frac{3M_xL_{\theta}^2\beta^2U_2(t-K)}{n\mu_x^2t^2}+\frac{2\beta L_x\sqrt{U_1(t-K)V_1(t-K)}}{\sqrt{n}\mu_xt}+\frac{2\beta L_{\theta}\sqrt{U_1(t-K)V_2(t-K)}}{\sqrt{n}\mu_xt}\Bigg].
		\end{aligned}
	\end{equation}
	
	According to Corollary \ref{coro1} and Lemma \ref{55.10}, we have
	\begin{align*}
		&U_1(k)\leq\frac{(K_1+K)^{2\beta}}{(k+K)^{2\beta}}U_1(K_1)+ \frac{1}{(k+K)^{2\beta}}\cdot\frac{\beta^2\bar{M}}{n\mu_x^2}\sum\nolimits_{t=K_1+K}^{k+K-1}\frac{(t+1)^{2\beta}}{t^2}\\
		&\quad+ \frac{\beta^2c_7}{(k+K)^{2\beta}}\sum\nolimits_{t=K_1+K}^{k+K-1}\frac{(t+1)^{2\beta}}{t^2}\left[\frac{\beta^2c_5}{(1.5\beta-1)n\mu_x^2}\cdot\frac{1}{t}+\frac{c_6}{t^2}\right]\\
		&\quad+\frac{3M_xL_{\theta}^2\beta^2}{n\mu_x^2(k+K)^{2\beta}}\sum\nolimits_{t=K_1+K}^{k+K-1}\frac{(t+1)^{2\beta}}{t^2}\left[\frac{\beta^2c_5^{'}}{(1.5\beta-1)n\mu_{\theta}^2}\cdot\frac{1}{t}+\frac{c_6^{'}}{t^2}\right]\\
		&\quad+\frac{\beta^2L_x^2c_8}{n\mu_x^2(k+K)^{2\beta}}\sum\nolimits_{t=K_1+K}^{k+K-1}\frac{(t+1)^{2\beta}}{t^2}\cdot\frac{\hat{V}_1}{t^2}\\
		&\quad+\frac{\beta^2L_{\theta}^2c_8}{n\mu_x^2(k+K)^{2\beta}}\sum\nolimits_{t=K_1+K}^{k+K-1}\frac{(t+1)^{2\beta}}{t^2}\cdot\frac{\hat{V}_2}{t^2}+\frac{2L_xL_{\theta}\beta^2}{n\mu_x^2(k+K)^{2\beta}}\sum\nolimits_{t=K_1+K}^{k+K-1}\frac{(t+1)^{2\beta}}{t^2}\frac{\sqrt{\hat{V}_1\hat{V}_2}}{t^2}\\
		&\quad+\frac{2\beta L_x}{\sqrt{n}\mu_x(k+K)^{2\beta}}\sum\nolimits_{t=K_1+K}^{k+K-1}\frac{(t+1)^{2\beta}}{t}\sqrt{\frac{\beta^2c_5}{(1.5\beta-1)n\mu_x^2}\cdot\frac{1}{t}+\frac{c_6}{t^2}}\sqrt{\frac{\hat{V}_1}{t^2}}\\
		&\quad+\frac{2\beta L_{\theta}}{\sqrt{n}\mu_x(k+K)^{2\beta}}\sum\nolimits_{t=K_1+K}^{k+K-1}\frac{(t+1)^{2\beta}}{t}\sqrt{\frac{\beta^2c_5}{(1.5\beta-1)n\mu_x^2}\cdot\frac{1}{t}+\frac{c_6}{t^2}}\sqrt{\frac{\hat{V}_2}{t^2}}.
	\end{align*}
	Since $\sqrt{a+b}\leq\sqrt{a}+\sqrt{b}$, we can achieve
	\begin{equation}
		\sqrt{\frac{\beta^2c_5}{(1.5\beta-1)n\mu_x^2}\cdot\frac{1}{t}+\frac{c_6}{t^2}}\cdot\sqrt{\frac{\hat{V}_1}{t^2}}\leq\beta \sqrt{\frac{c_5\hat{V}_1}{(1.5\beta-1)n\mu_x^2}}\cdot\frac{1}{t^{1.5}}+\frac{\sqrt{c_6\hat{V}_1}}{t^2},
	\end{equation}
	then
		\begin{align*}
			&	U_1(k)\leq\frac{\beta^2\bar{M}\sum_{t=K_1+K}^{k+K-1}\frac{(t+1)^{2\beta}}{t^2}}{(k+K)^{2\beta}n\mu_x^2}+\frac{(K_1+K)^{2\beta}U_1(K_1)}{(k+K)^{2\beta}}+\frac{2\beta^2\sqrt{c_5}(L_x\sqrt{\hat{V}_1}+L_{\theta}\sqrt{\hat{V}_2})\sum_{t=K_1+K}^{k+K-1}\frac{(t+1)^{2\beta}}{t^{2.5}}}{\sqrt{1.5\beta-1}\times n\mu_x^2(k+K)^{2\beta}}\\
			&+\frac{1}{(k+K)^{2\beta}}\left[\frac{\beta^4c_5c_7}{(1.5\beta-1)n\mu_x^2}+\frac{3M_x\beta^4L_{\theta}^2c_5^{'}}{(1.5\beta-1)n^2\mu_x^2\mu_{\theta}^2}+\frac{2\beta(L_x\sqrt{c_6\hat{V}_1}+L_{\theta}\sqrt{c_6^{'}\hat{V}_2})}{\sqrt{n}\mu_x}\right]\sum\nolimits_{t=K_1+K}^{k+K-1}\frac{(t+1)^{2\beta}}{t^{3}}\\
			&+\frac{1}{(k+K)^{2\beta}}\left[\beta^2c_6c_7+\frac{3M_x\beta^2L_{\theta}^2c_6^{'}}{n\mu_x^2}+\frac{\beta^2(L_x^2\hat{V}_1+L_{\theta}^2\hat{V}_2)c_8}{n\mu_x^2} +\frac{2L_{x}L_{\theta}\beta^2\sqrt{\hat{V}_1\hat{V}_2}}{n\mu_x^2}\right]\sum\nolimits_{t=K_1+K}^{k+K-1}\frac{(t+1)^{2\beta}}{t^{4}}.
		\end{align*}
	Recall \eqref{t_squre_cube}  and note that
	\begin{equation}
		\begin{aligned}
			\sum\nolimits_{t=a}^{b}\frac{(t+1)^{2\beta}}{t^{2.5}}&\leq\sum\nolimits_{t=a}^{b}\frac{2(t+1)^{2\beta}}{(t+1)^{2.5}}\leq\int_{a+1}^{b+1}2t^{2\beta-2.5}dt\leq\frac{2(b+1)^{2\beta-1.5}}{2\beta-1.5},\\
			\sum\nolimits_{t=a}^{b}\frac{(t+1)^{2\beta}}{t^{4}}&\leq\sum\nolimits_{t=a}^{b}\frac{2(t+1)^{2\beta}}{(t+1)^4}\leq\int_{a+1}^{b+1}2t^{2\beta-4}dt\leq\frac{2(b+1)^{2\beta-3}}{2\beta-4},\quad \forall a\geq 16.
		\end{aligned}
	\end{equation}
	Then by noticing that $c_7=c_8=\mathcal{O}(1)$ and   using Lemma \ref{rate1}, we have
		\begin{align*}
			&	U_1(k)\leq \frac{\beta^2\bar{M}}{(2\beta-1)n\mu_x^2(k+K)}+\mathcal{O}\left(\frac{A_1+A_2+B_1+B_2+n}{n\sqrt{n}(1-\rho_w)}\right)\frac{1}{(k+K)^{1.5}}+\mathcal{O}\left(\frac{A_1+A_2+B_1+B_2+n}{n(1-\rho_w)^2}\right)\frac{1}{(k+K)^2}\notag\\
			&~+\mathcal{O}\left(\frac{A_1+A_2+B_1+B_2+n}{n(1-\rho_w)^{2}}\right)\frac{1}{(k+K)^3}+\mathcal{O}\left(\frac{A_1+A_2+B_1+B_2+n}{n(1-\rho_w)^{2\beta}}\right)\frac{1}{(k+K)^{2\beta}}\\
			&	=\frac{\beta^2\bar{M}}{(2\beta-1)n\mu_x^2(k+K)}+\mathcal{O}\left(\frac{A_1+A_2+B_1+B_2+n}{n\sqrt{n}(1-\rho_w)}\right)\frac{1}{(k+K)^{1.5}}+\mathcal{O}\left(\frac{A_1+A_2+B_1+B_2+n}{n(1-\rho_w)^2}\right)\frac{1}{(k+K)^2}.
		\end{align*}
	By recalling \eqref{bd-mse}, we have $\frac{1}{n}\sum\nolimits_{i=1}^{n}\mathbb{E}[||x_i(k)-x_*||^2] \leq 2U_1(k)+\frac{2V_1(k)}{n}.$
This together with  \eqref{V_order} and the estimate of $\hat{V}_1$ in Lemma \ref{rate1} prove the   result.
\end{proof}

In light of relation (\ref{mainreslut}), by recalling the definitions of $A_1,A_2, B_1,B_2$ in Lemma \ref{rate1}, we can see that the convergence rate is proportional to  initial errors for both   computational problem $\sum\nolimits_{i=1}^n\left\|x_i(0)-x_*\right\|^2$ and parameter learning problem $\sum\nolimits_{i=1}^n\left\|\theta_i(0)-\theta_*\right\|^2$.
It is worth noting that the heterogeneity  of  agents' individual cost functions, measured by $ B_1= \sum\nolimits_{i=1}^{n}||\nabla_xf_i(x_*;\theta_*)||^2,~ B_2 = \sum\nolimits_{i=1}^{n}||\nabla h_i(\theta_*)||^2$, also influence the convergence rate in a similar   way.
Though $ \theta_* , x_*$ are  respectively the optimal solutions to $\min_{\theta} \frac{1}{n} \sum\nolimits_{i=1}^n h_i(\theta)$ and  $\min_{x} \frac{1}{n} \sum\nolimits_{i=1}^n f_i\left(x ; \theta^*\right)$,
they are usually not the optimal solution to each local function $ h_i(\theta),f_i(x,\theta).$ Therefore,  the bigger the difference between the local costs, the slower the convergence rate of the algorithm.

\begin{remark}  Here we give some comments regerading  the influence of the network size $n$ and the spectral gap $(1-\rho_w)$  on  the convergence rate.
Since $A_1, A_2,B_1$ and $B_2$ are all $\mathcal{O}(n)$, we can simplify the relation (\ref{mainreslut}) as follow.
\begin{equation}
	\frac{1}{n}\sum\nolimits_{i=1}^{n}\mathbb{E}[||x_i(k)-x_*||^2]\leq \frac{\beta^2\bar{M}}{(2\beta-1)n\mu_x^2(k+K)}+\frac{\mathcal{O}\left(\frac{1}{\sqrt{n}(1-\rho_w)}\right)}{(k+K)^{1.5}}+\frac{\mathcal{O}\left(\frac{1}{(1-\rho_w)^2}\right)}{(k+K)^2}\label{simresult}.
\end{equation}
It is noticed that   the  algorithm  converges faster for better network connectivity (i.e., smaller $\rho_w$).  For example, a fully connected graph is the most efficient connection topology since $\rho_w=0$. In contrast, it holds $1-\rho_w\rightarrow 0$ as $n\rightarrow \infty$ for the cycle graph, which indicates that the  algorithm  will converge very slowly for large-scale cycle graphs. The following table taken from \cite[Chapter 4]{FB-LNS} characterizes the  relation between network size $n$ and the spectral gap.
Considering plugging the order concerning $n$ from the table into relation (\ref{simresult}),
we may obtain the quantitive influence of the network size on the convergence rate.
\renewcommand\arraystretch{1.5}
\begin{table}[tbhp]
	\footnotesize
	\vspace{-5pt}
	\caption{Relation between the network size $n$ and the spectral gap $1-\rho_w$ }
	\vspace{-5pt}
	\begin{center}
		\begin{tabular}{|c|c|c|c|}
			\hline
			Network Topology & Spectral Gap$(1-\rho_w)$ & Network Topology  & Spectral Gap$(1-\rho_w)$\\
			\hline
			Path Graph & $\mathcal{O}(\frac{1}{n^2})$ &2D-mesh Graph & $\mathcal{O}(\frac{1}{n})$\\
			\hline
			Cycle Graph & $\mathcal{O}(\frac{1}{n^2})$ & Complete Graph & $1$\\
			\hline
		\end{tabular}
	\end{center}
	\label{table}
\end{table}
\vspace{-15pt}
\end{remark}

There are other factors such as the strong convexity and Lipschitz smoothness parameters, as well as the variance of the stochastic gradient, all of which can also affect the convergence rate.
We will not include a quantitative analysis of these factors since the big $\mathcal{O}$ constant in the convergence rate is already quite complex and we often use the relation like $\mu_x\leq L_x$ for simplicity. While some intuitive property can be naturally obtained from (\ref{simresult}): the larger convexity and Lipschitz smoothness parameters can lead to the faster rate;  the higher variance  of stochastic  gradient descent leads to a lower convergence rate since term $\bar{M}$ defined by \eqref{barM} gets bigger.

\subsection{Transient Time}
In this subsection, we will establish the transient iteration needed for the CDSA algorithm to reach its dominant rate.

Firstly, we recall the convergence rate from  \cite[Theorem 2]{pu2021sharp} for the centralized stochastic   gradient descent,
\begin{equation}
\mathbb{E}\left[\left\|x(k)-x_*\right\|^2\right] \leq \frac{\beta^2 \bar{M}}{(2 \beta-1) n \mu^2 k}+\mathcal{O}\left(\frac{1}{n}\right) \frac{1}{k^2}.
\end{equation}
Comparing it to (\ref{simresult}), we may conclude  that our distributed algorithm converges to the optimal solution  at a comparable rate to the centralized algorithm, since they are both of the same order $\mathcal{O}(\frac{1}{k})$. Besides, our work demonstrates
that the network connectivity $\rho_w$  does not influence the term $\mathcal{O}(\frac{1}{k})$, it only appears in higher-order terms $\mathcal{O}(\frac{1}{k^{1.5}})$ and $\mathcal{O}(\frac{1}{k^{2}})$. Though our distributed algorithm asymptotically  reaches  the same order of convergence rate as that of the centralized algorithm, it's unclear how many iterations it takes to reach the dominate order $\mathcal{O}(\frac{1}{k})$ since there are two extra error terms $\mathcal{O}(\frac{1}{k^{1.5}})$ and $\mathcal{O}(\frac{1}{k^{2}})$  induced   by   averaging consensus.  We refer to the number of iterations before distributed stochastic approximation method reaches its dominant rate as \textbf{transient iterations}, i.e., when iteration $k$ is relatively small, the terms other than $n$ and $k$ still dominate the convergence rate\cite[Section 2]{nips_exp}.
The next theorem state the iterations needed for Algorithm \ref{alg:CDSA}  to reach its dominant rate.
\begin{theory}\label{thm2}
Let Assumption \ref{assump. func_pro}$\sim$\ref{assump. graph} hold, and set stepsize as (\ref{5.29}), where $\beta>2$. It takes $K_{T}=\mathcal{O}\big(\frac{n}{(1-\rho_w)^2}\big)$ iteration counts for \cref{alg:CDSA} to reach the asymptotic rate of convergence, i.e. when $k\geq K_T$, we have $\frac{1}{n}\sum\nolimits_{i=1}^{n}\mathbb{E}[||x_i(k)-x_*||^2]\leq \frac{\beta^2\bar{M}}{(2\beta-1)n\mu_x^2k}\mathcal{O}(1)$.
\end{theory}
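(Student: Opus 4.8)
The plan is to read the claim off directly from the main convergence estimate~\eqref{mainreslut} (equivalently its simplified form~\eqref{simresult}): all that is needed is to locate the iteration range in which the leading term $\frac{\beta^2\bar{M}}{(2\beta-1)n\mu_x^2(k+K)}$ dominates the other two terms on the right-hand side. Since the offset $K$ in~\eqref{5.30} is a constant independent of $n$ and $\rho_w$, it suffices to find the smallest threshold on $k$ past which both $\frac{\mathcal{O}(1/(\sqrt{n}(1-\rho_w)))}{(k+K)^{1.5}}$ and $\frac{\mathcal{O}(1/(1-\rho_w)^2)}{(k+K)^{2}}$ are $\mathcal{O}(1/(n(k+K)))$, i.e. at most a constant multiple of the leading term, whose order is $\Theta(1/(n(k+K)))$.

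First I would handle the $(k+K)^{-1.5}$ term: after multiplying through by $(k+K)$, the inequality $\frac{1}{\sqrt{n}(1-\rho_w)(k+K)^{1.5}}\lesssim\frac{1}{n(k+K)}$ is equivalent to $(k+K)^{1/2}\gtrsim\frac{\sqrt{n}}{1-\rho_w}$, that is $k+K\gtrsim\frac{n}{(1-\rho_w)^2}$. For the $(k+K)^{-2}$ term, $\frac{1}{(1-\rho_w)^2(k+K)^2}\lesssim\frac{1}{n(k+K)}$ reduces directly to $k+K\gtrsim\frac{n}{(1-\rho_w)^2}$ as well. Taking $K_T$ to be the maximum of these two thresholds and of $K_1-K$ (so that~\eqref{mainreslut} is applicable at $k=K_T$) changes nothing, since $K_1-K=\mathcal{O}(1/(1-\rho_w))$ is of strictly lower order by~\eqref{K_1}; hence $K_T=\mathcal{O}\!\left(\frac{n}{(1-\rho_w)^2}\right)$.

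Then for every $k\ge K_T$, each of the three terms in~\eqref{simresult} is $\mathcal{O}(1/(n(k+K)))$, and since $K$ is a fixed constant, $k+K=\mathcal{O}(k)$; absorbing $K$ into the constant, the whole bound collapses to $\frac{1}{n}\sum_{i=1}^{n}\mathbb{E}[||x_i(k)-x_*||^2]\le\frac{\beta^2\bar{M}}{(2\beta-1)n\mu_x^2 k}\,\mathcal{O}(1)$, which is exactly the asserted asymptotic rate and coincides with the centralized rate recalled above.

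The one subtle point I anticipate is the bookkeeping of the hidden constants. One must confirm that the $\mathcal{O}(1)$ prefactors of the two high-order terms --- which, traced back through $\hat{V}_1,\hat{V}_2,c_4,c_5$ via Lemma~\ref{rate1}, carry the dependence $A_1+A_2+B_1+B_2+n$ --- are genuinely $\mathcal{O}(n)$ in the numerator, so that after the division by $n$ they contribute precisely $\mathcal{O}(1/(\sqrt{n}(1-\rho_w)))$ and $\mathcal{O}(1/(1-\rho_w)^2)$ respectively; this is what makes the two threshold computations line up at $n/(1-\rho_w)^2$ rather than at, say, $\sqrt{n}/(1-\rho_w)$ or $n^2/(1-\rho_w)^2$. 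Everything else is a routine comparison of powers of $(k+K)$.
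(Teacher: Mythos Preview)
Your proposal is correct and matches the paper's own proof essentially line for line: the paper simply invokes~\eqref{simresult} and checks that each of the two higher-order terms is dominated by the leading $\frac{\beta^2\bar{M}}{(2\beta-1)n\mu_x^2(k+K)}$ precisely when $k+K\gtrsim\frac{n}{(1-\rho_w)^2}$, which is exactly the pair of threshold computations you describe. Your additional remarks about $K_1-K$ being of lower order and the $\mathcal{O}(n)$ scaling of $A_1+A_2+B_1+B_2+n$ are implicit in the paper's argument and only make the reasoning more complete.
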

\begin{proof}
Recalling relation in \cref{simresult}, we see that for any $
k\geq \mathcal{O}\left(\frac{n}{(1-\rho_w)^2}\right)$,
\begin{align*}	
	\frac{\beta^2\bar{M}}{(2\beta-1)n\mu_x^2(k+K)}&\geq\mathcal{O}(\frac{1}{\sqrt{n}(1-\rho_w)})
	\frac{1}{(k+K)^{1.5}},\\
	\frac{\beta^2\bar{M}}{(2\beta-1)n\mu_x^2(k+K)}&\geq\mathcal{O}\left(\frac{1}{(1-\rho_w)^2}\right)\frac{1}{(k+K)^2}.
\end{align*}
\end{proof}
\vspace{-15pt}
\section{Experiments}
\label{sec:experiments}
In this section, we will provide numerical examples to verify our theoretical findings, and
carry out  experiments by Bluefog\footnote{https://github.com/Bluefog-Lib/bluefog}. It is a python library that can be connected to the NVIDIA Collective Communications Library (NCCL) for multi-GPU computing or Message Passing Interface (MPI) library for multi-CPU computing\cite{bluefog}, i.e., each agent in our distributed experiment scenario is CPU.
\subsection{Ridge Regression}
Consider the following ridge-distributed regression problem with an unknown regularization  parameter $\theta_*$,
\[
\mathcal{C}_x(\theta_*): \min _{x \in \mathbb{R}^p} \sum\nolimits_{i=1}^n \mathbb{E}_{u_i, v_i}\left[\left(u_i^T x-v_i\right)^2+\theta_*|| x||^2\right],
\]
where $\theta_*$ can be obtained by  the distributed  learning  problem below,
\[
\mathcal{L}_{\theta}: \theta_*=\operatorname{argmin} \sum\nolimits_{i=1}^n\left(\theta-\alpha_i\right)^2.
\]
Specially,  for agent $i\in \mathcal{N}\triangleq \{1,\cdots, n\}$, its local objective functions are specified as
\[
f_i(x;\theta )= \min_x\mathbb{E}_{u_i, v_i}\left[\left(u_i^T x-v_i\right)^2+\theta|| x||^2\right], h_i(\theta)=\min_{\theta} (\theta-\alpha_i)^2.
\]
Here  $(u_i,v_i)$  are data sample collected by each agent $i$, where $u_i\in \mathbb{R}^p$ are the sample features, while  $v_i\in \mathbb{R}$ represent the observed outputs.

{\bf Parameter settings.} Set $ p=5$ and suppose that for all $i\in\mathcal{N}$,
each component  of $u_i\in \mathbb{R}^p $  is an independent identical distribution in $U(-0.5,0.5)$, and $v_i $ is drawn according to $v_i=u_i^T\widetilde{x}_i+\epsilon_i$, where  $\epsilon_i$ is an  gaussian random variable specified by $N(0,0.01)$,  and  $\widetilde{x}_i=(1~3~ 5~ 4 ~9)$ is a predefined parameter. Set $\alpha_i=0.01\times i$. It can be easily calculated that the   optimal solutions are
$	\theta_* =0.005 (n+1)$, and $x_*=\left[\sum\nolimits_{i=1}^{n}\mathbb{E}_{u_i}(u_iu_i^T)
+n\theta_*\mathbf{I}\right]\mathbb{E}_{u_i}(u_iu_i^T)
=\frac{1}{12}(\frac{1}{12}+\theta_*)^{-1}\frac{1}{n}\sum\nolimits_{i=1}^{n}\widetilde{x}_i$.

We compare the performance of Algorithm \ref{alg:CDSA} under the path graph and complete graph topology with different network size $n$.  In light of the results in \cref{table} of the path graph and  complete graph, convergence rate estimation can be reformulated.
\begin{align}
Path:	\frac{1}{n}\sum\nolimits_{i=1}^{n}\mathbb{E}[||x_i(k)-x_*||^2]\leq& \frac{\beta^2\bar{M}}{(2\beta-1)n\mu_x^2(k+K)}+\frac{\mathcal{O}(n^{3/2})}{(k+K)^{1.5}}+\frac{\mathcal{O}(n^2)}{(k+K)^2}\label{path},\\
Complete: \frac{1}{n}\sum\nolimits_{i=1}^{n}\mathbb{E}[||x_i(k)-x_*||^2]\leq& \frac{\beta^2\bar{M}}{(2\beta-1)n\mu_x^2(k+K)}+\frac{\mathcal{O}(1/\sqrt{n})}{(k+K)^{1.5}}+\frac{1}{(k+K)^2}\label{complete}.
\end{align}

We run Algorithm \ref{alg:CDSA}, where the initial values are set as $(x_i(0),\theta_i(0))=(\pmb{0}_5,1) \forall i$, and
the weighted adjacency matrix of the communication network is built according to the Metropolis-Hastings rule \cite{survey2}. According to (\ref{5.29}), we choose the stepsizes as $\alpha_k=\gamma_k =\frac{20}{k+20}$ for any $k\geq 0$.
\begin{figure}[h]
\begin{minipage}{8.2cm}
	\centering
	\includegraphics[width=7.5cm]{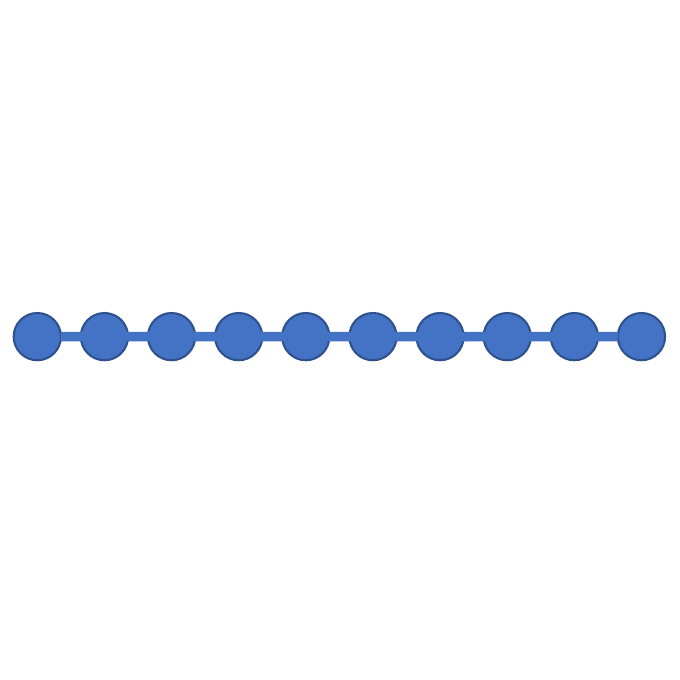}
	\centering{\fontsize{9pt}{25pt}\mdseries(a.1)n=10 path graph topology}
\end{minipage}
\begin{minipage}{8cm}
	\centering
	\includegraphics[width=7.5cm]{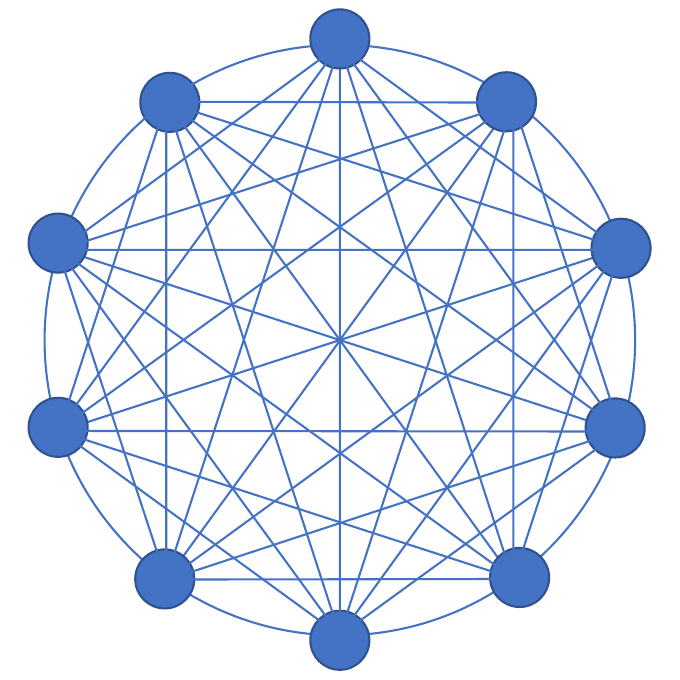}
	\centering{\fontsize{9pt}{25pt}\mdseries(b.1)n=10 complete graph topology}
\end{minipage}
\\
\begin{minipage}{8cm}
	\centering
	\includegraphics[width=7.8cm]{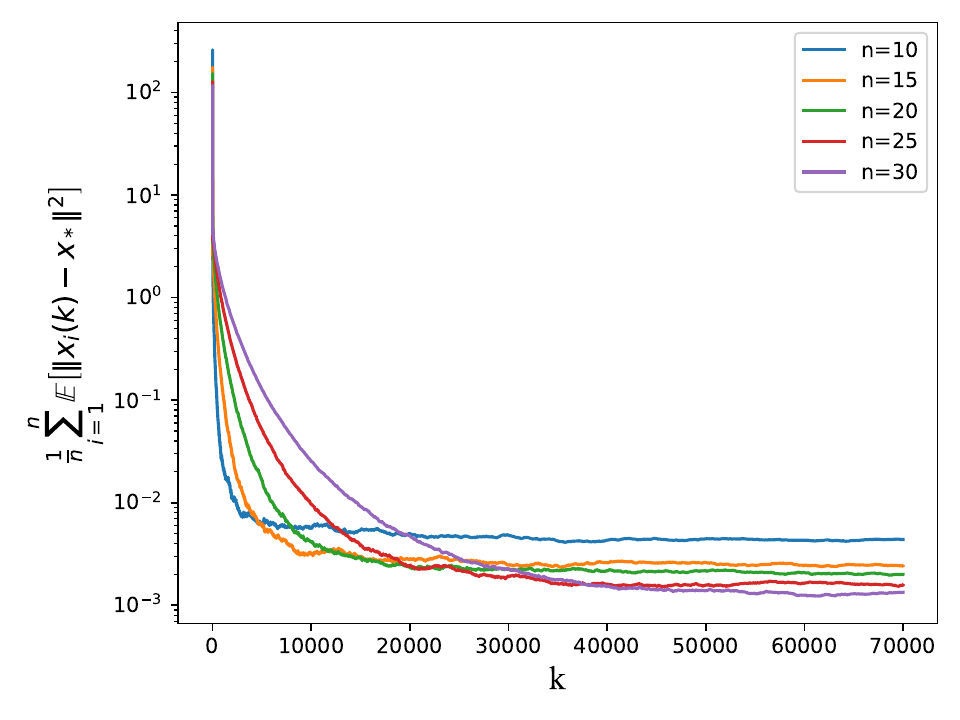}
	\centering{\fontsize{9pt}{25pt}\mdseries(a.2) The performance of path graph}
\end{minipage}
\begin{minipage}{8cm}
	\centering
	\includegraphics[width=7.8cm]{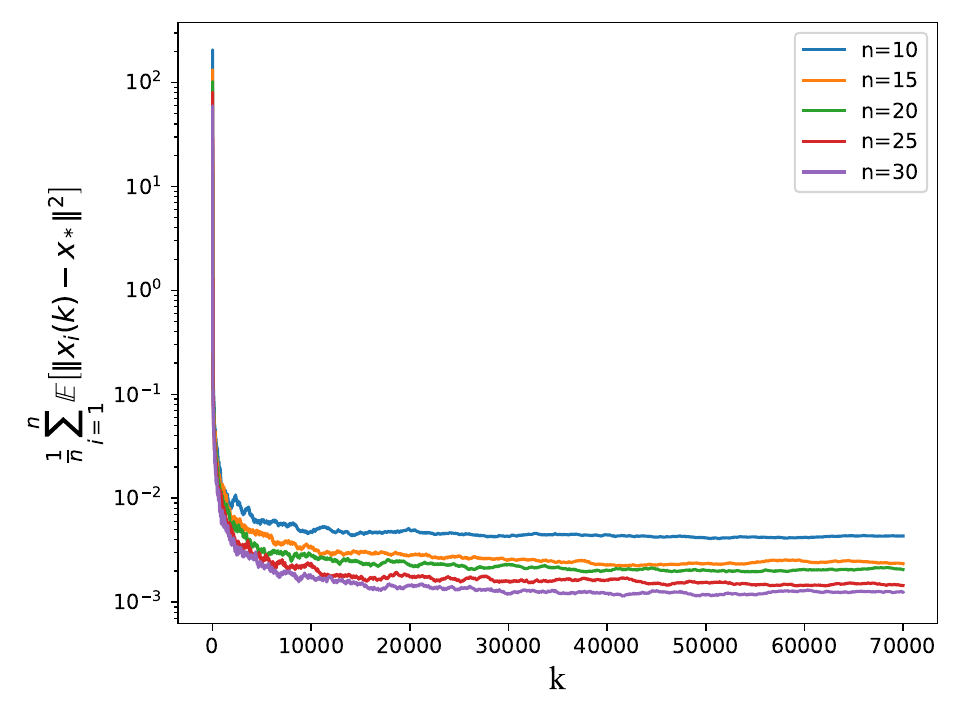}
	\centering{\fontsize{9pt}{25pt}\mdseries(b.2) The performance of complete graph}
\end{minipage}

\caption{The performance of CDSA between path graph and complete graph topology. The results are averaged over 200 Monte Carlo sampling.}
\label{fig2}
\end{figure}

We demonstrate the empirical results in Fig.~2, where the empirical  mean-squared error $\frac{1}{n}\sum\nolimits_{i=1}^{n}\mathbb{E}[||x_i(k)-x_*||^2]$  is calculated  by averaging through 200 sample paths.  We can see from the Subfigure (a.2) that for  the path graph,  when the iterate $k$ is small,    the larger network size $n$  will lead  to the higher  mean-squared error $\frac{1}{n}\sum\nolimits_{i=1}^{n}\mathbb{E}[||x_i(k)-x_*||^2]$.  However, with the increase of $k$,  we observe a  phase transition that a larger  network size $n$ will lead to a smaller  mean-squared error $\frac{1}{n}\sum\nolimits_{i=1}^{n}\mathbb{E}[||x_i(k)-x_*||^2]$ (namely faster convergence rate). This phenomenon matches  
the theoretical result (\ref{path}): when $k$ is small, the main factor influencing the convergence rate is the second and third term concerning   the network size $n$ via the distributed consensus protocol, while when $k$ is large, the first term inherited  from centralized stochastic gradient descent dominates the convergence rate.

Compared it  to the empirical performance  of the complete graph shown in subfigure (b.2), we can find that from the beginning to the end, a larger network  $n$ generates   smaller errors, which  also matches (\ref{complete}).  
\subsection{Logistic Regression}
We further consider  convex but not strongly convex problem, and  use logistic regression to demonstrate that our algorithm can also leads to asymptotic convergence.

Consider the binary classification via logistic regression with unknown regularization parameter $\theta_*$,
\[
\mathcal{C}_{\eta}(\theta_*):~\min_{\eta}\sum\nolimits_{i=1}^{n}\sum\nolimits_{j=1}^{m_i}\ln \left(1+e^{-\eta^Tx_{ij}l_{ij}}\right)+\frac{\theta_*}{2}||\eta||^2,
\]
where $\theta_*$ can be obtained by a distributed parameter learning problem as follow,
\[
\mathcal{L}_{\theta}:~\theta_*=\operatorname{argmin} \sum\nolimits_{i=1}^n\left(\theta-\alpha_i\right)^2.
\]
As for agent $i$, its  its own local computational problem and parameter learning problem are as follows.
\[
f_i(\eta;\theta )=\min_{\eta} \sum\nolimits_{j=1}^{m_i}\ln \left(1+e^{-\eta^Tx_{ij}l_{ij}}\right)+\frac{\theta_*}{2n}||\eta||^2,~ h_i(\theta)=\min_{\theta} (\theta-\alpha_i)^2.
\]

In this scenario, we set $\alpha_i=0.01\times i$ and let each agent $i\in\mathcal{N}$ possess  dataset $\mathcal{D}_i\triangleq\{(x_{ij},l_{ij}): j=1,\cdots m_i\}$, where $x_{ij}$ represents a three-dimensional sample feature where the first dimension is $1$ and the other two dimension are selected from $N((1, 0)^T, \pmb{I})$ or $N((0, 1)^T, \pmb{I})$, while $l_{ij}$ is the related sample label $1$ or $-1$ respectively. Suppose that every agent holds a number of positive samples and negative samples which only accessible to itself.  
\begin{figure}[htbp]
\centering
\includegraphics[width=10cm]{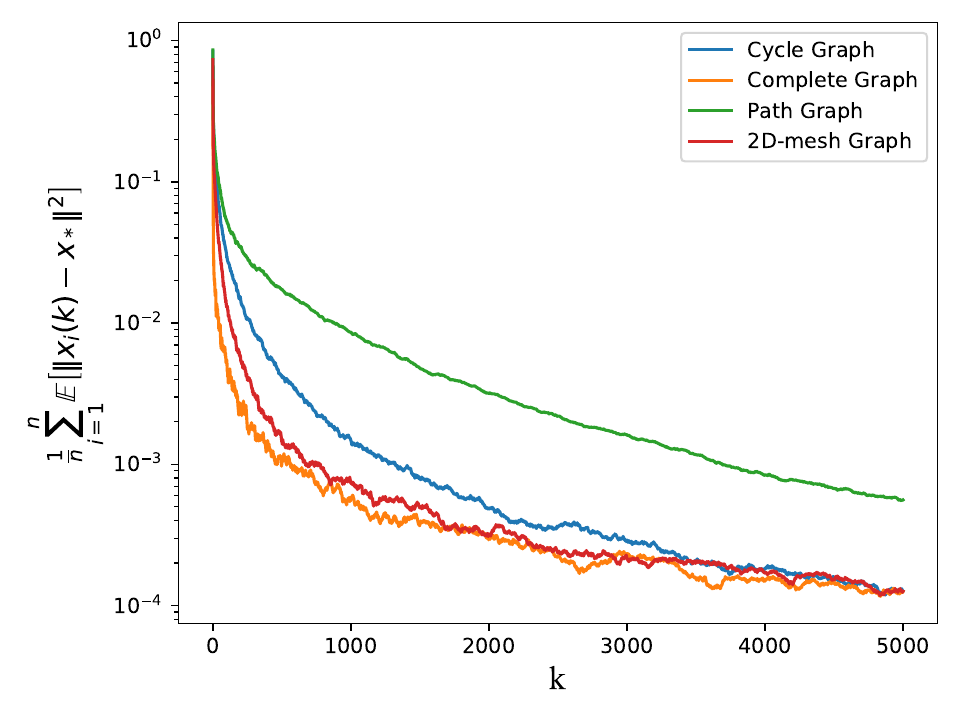}
\caption{The performance of CDSA of $25$ agents under four topologies in \cref{table}  for binary classification via logistic regression. The results are averaged over 200 Monte Carlo sampling.}
\label{figlog}				
\end{figure}

We now  compare the empirical performance of Algorithm \ref{alg:CDSA}  under  four  classes of   graph topologies, path graph, cycle graph, 2D-mesh graph, and complete graph. We set $n=25$ and  run Algorithm \ref{alg:CDSA} with initial values $(\eta_i(0),\theta_i(0))=\pmb{0}_4$ for all $i\in \mathcal{N}$, where the stepsize and weighted adjacency matrix are set the same as Ridge Regression. The empirical results are shown in
\cref{figlog}, which  shows that the complete graph has best performance, 2D-mesh graph  has the second-best performance, while the path graph displays the worst performance. These empirical findings match that listed in  \cref{table},  where the 2D-mesh graph has a  larger spectral gap  than  the path graph and cycle path,  hence leads to a lower mean-squared error.

%
\section{Conclusions}
\label{sec:conclusions}
In this work, we consider the distributed optimization problem $\min_x \frac{1}{n}\sum\nolimits_{i=1}^{n}f_i(x;\theta^*)$ with the unknown parameter $\theta^*$  collaboratively  solved by a distributed parameter learning problem $\min_{\theta} \frac{1}{n}\sum\nolimits_{i=1}^{n}h_i( \theta )$. Each agent only has access to its local computational problem $f_i(x,\theta)$ and its parameter learning problem $h_i(\theta)$.
We  propose a  coupled distributed stochastic approximation algorithm for  resolving this special distributed optimization, where  agents can  exchange information about decision variables $x$ and learning parameter $\theta$ with  neighbors  over a connected network. We quantitatively characterize the factors that influence  the  rate of convergence, and validates that the 
algorithm asymptotically achieves the optimal network-independent convergence rate compared to the centralized algorithm scheme.
In addition, we analyze the transient time $K_T$,  and show that when the iterate $k\geq K_T$, the dominate factor  influencing the convergence rate  is  related to stochastic gradient descent,
while for small $k<K_T$, the main factor   influencing the convergence rate  originates  from the distributed average consensus method. Future work will consider more general problems under weakened assumptions. It is of interests to explore the accelerated algorithm to obtain a faster convergence rate.
\bibliographystyle{elsarticle-num}
\bibliography{references}

\begin{thebibliography}{10}
\expandafter\ifx\csname url\endcsname\relax
  \def\url#1{\texttt{#1}}\fi
\expandafter\ifx\csname urlprefix\endcsname\relax\def\urlprefix{URL }\fi
\expandafter\ifx\csname href\endcsname\relax
  \def\href#1#2{#2} \def\path#1{#1}\fi

\bibitem{dis_economic1}
G.~Binetti, A.~Davoudi, D.~Naso, B.~Turchiano, F.~L. Lewis, A distributed
  auction-based algorithm for the nonconvex economic dispatch problem, IEEE
  Transactions on Industrial Informatics 10~(2) (2014) 1124--1132.
\newblock \href {https://doi.org/10.1109/TII.2013.2287807}
  {\path{doi:10.1109/TII.2013.2287807}}.

\bibitem{dis_economic2}
P.~Yi, Y.~Hong, F.~Liu, Initialization-free distributed algorithms for optimal
  resource allocation with feasibility constraints and application to economic
  dispatch of power systems, Automatica 74 (2016) 259--269.
\newblock \href
  {https://doi.org/https://doi.org/10.1016/j.automatica.2016.08.007}
  {\path{doi:https://doi.org/10.1016/j.automatica.2016.08.007}}.

\bibitem{dis_grids}
A.~Cort\'{e}s, S.~Mart\'{I}nez, \href{https://doi.org/10.1137/15M103889X}{A
  projection-based decomposition algorithm for distributed fast computation of
  control in microgrids}, SIAM Journal on Control and Optimization 56~(2)
  (2018) 583--609.
\newblock \href {https://doi.org/10.1137/15M103889X}
  {\path{doi:10.1137/15M103889X}}.
\newline\urlprefix\url{https://doi.org/10.1137/15M103889X}

\bibitem{dis_cont_power}
S.~Sahyoun, S.~M. Djouadi, K.~Tomsovic, S.~Lenhart,
  \href{https://epubs.siam.org/doi/abs/10.1137/1.9781611974072.57}{Optimal
  Distributed Control for Continuum Power Systems}, pp. 416--422.
\newblock \href {https://doi.org/10.1137/1.9781611974072.57}
  {\path{doi:10.1137/1.9781611974072.57}}.
\newline\urlprefix\url{https://epubs.siam.org/doi/abs/10.1137/1.9781611974072.57}

\bibitem{smartgrid1}
L.-N. Liu, G.-H. Yang, Distributed optimal economic environmental dispatch for
  microgrids over time-varying directed communication graph, IEEE Transactions
  on Network Science and Engineering 8~(2) (2021) 1913--1924.
\newblock \href {https://doi.org/10.1109/TNSE.2021.3076526}
  {\path{doi:10.1109/TNSE.2021.3076526}}.

\bibitem{dis_control}
V.~Krishnan, S.~Mart\'{\i}nez,
  \href{https://doi.org/10.1137/16M1080926}{Distributed control for spatial
  self-organization of multi-agent swarms}, SIAM Journal on Control and
  Optimization 56~(5) (2018) 3642--3667.
\newblock \href {https://doi.org/10.1137/16M1080926}
  {\path{doi:10.1137/16M1080926}}.
\newline\urlprefix\url{https://doi.org/10.1137/16M1080926}

\bibitem{dis_auto2}
T.~Skibik, M.~M. Nicotra, Analysis of time-distributed model predictive control
  when using a regularized primal–dual gradient optimizer, IEEE Control
  Systems Letters 7 (2022) 235--240.
\newblock \href {https://doi.org/10.1109/LCSYS.2022.3186631}
  {\path{doi:10.1109/LCSYS.2022.3186631}}.

\bibitem{dis_auto3}
Y.-L. Yang~Tao, Xu~Lei, Event-triggered distributed optimization algorithms,
  Acta Automatica Sinica 48~(1) (2022) 133--143.
\newblock \href {https://doi.org/10.16383/j.aas.c200838}
  {\path{doi:10.16383/j.aas.c200838}}.

\bibitem{ML1}
A.~Nedic, Distributed gradient methods for convex machine learning problems in
  networks: Distributed optimization, IEEE Signal Processing Magazine 37~(3)
  (2020) 92--101.
\newblock \href {https://doi.org/10.1109/MSP.2020.2975210}
  {\path{doi:10.1109/MSP.2020.2975210}}.

\bibitem{ML2}
S.~A. Alghunaim, A.~H. Sayed, Distributed coupled multiagent stochastic
  optimization, IEEE Transactions on Automatic Control 65~(1) (2020) 175--190.
\newblock \href {https://doi.org/10.1109/TAC.2019.2906495}
  {\path{doi:10.1109/TAC.2019.2906495}}.

\bibitem{dis_frame}
B.~Touri, B.~Gharesifard, \href{https://doi.org/10.1137/21M1442711}{A unified
  framework for continuous-time unconstrained distributed optimization}, SIAM
  Journal on Control and Optimization 61~(4) (2023) 2004--2020.
\newblock \href {https://doi.org/10.1137/21M1442711}
  {\path{doi:10.1137/21M1442711}}.
\newline\urlprefix\url{https://doi.org/10.1137/21M1442711}

\bibitem{survey2}
G.~Notarstefano, I.~Notarnicola, A.~Camisa, Distributed optimization for smart
  cyber-physical networks, Foundations and Trends in Systems and Control 7~(3)
  (2020) 253--383.
\newblock \href {https://doi.org/10.1561/2600000020}
  {\path{doi:10.1561/2600000020}}.

\bibitem{disgrad}
X.~Meng, Q.~Liu, A consensus algorithm based on multi-agent system with state
  noise and gradient disturbance for distributed convex optimization, Neuro
  computing 519 (2023) 148--157.
\newblock \href {https://doi.org/https://doi.org/10.1016/j.neucom.2022.11.051}
  {\path{doi:https://doi.org/10.1016/j.neucom.2022.11.051}}.

\bibitem{disadmm}
N.~S. Aybat, E.~Y. Hamedani, \href{https://doi.org/10.1137/17M1151973}{A
  distributed admm-like method for resource sharing over time-varying
  networks}, SIAM Journal on Optimization 29~(4) (2019) 3036--3068.
\newblock \href {https://doi.org/10.1137/17M1151973}
  {\path{doi:10.1137/17M1151973}}.
\newline\urlprefix\url{https://doi.org/10.1137/17M1151973}

\bibitem{disconstraint}
L.~Carlone, V.~Srivastava, F.~Bullo, G.~C. Calafiore,
  \href{https://doi.org/10.1137/120885796}{Distributed random convex
  programming via constraints consensus}, SIAM Journal on Control and
  Optimization 52~(1) (2014) 629--662.
\newblock \href {https://doi.org/10.1137/120885796}
  {\path{doi:10.1137/120885796}}.
\newline\urlprefix\url{https://doi.org/10.1137/120885796}

\bibitem{mislagrange}
N.~S. Aybat, H.~Ahmadi, U.~V. Shanbhag, On the analysis of inexact augmented
  lagrangian schemes for misspecified conic convex programs, IEEE Transactions
  on Automatic Control 67~(8) (2021) 3981--3996.

\bibitem{bertsimas2011RO}
D.~Bertsimas, D.~B. Brown, C.~Caramanis, Theory and applications of robust
  optimization, SIAM review 53~(3) (2011) 464--501.

\bibitem{ben2009robust}
A.~Ben-Tal, L.~El~Ghaoui, A.~Nemirovski, Robust optimization, Princeton
  university press, 2009.

\bibitem{robustop}
D.~Bertsimas, V.~Gupta, N.~Kallus, Data-driven robust optimization,
  Mathematical Programming 167 (2018) 235--292.

\bibitem{jie2018stochastic}
C.~Jie, L.~Prashanth, M.~Fu, S.~Marcus, C.~Szepesv{\'a}ri, Stochastic
  optimization in a cumulative prospect theory framework, IEEE Transactions on
  Automatic Control 63~(9) (2018) 2867--2882.

\bibitem{shapiro2021sto_prog}
A.~Shapiro, D.~Dentcheva, A.~Ruszczynski, Lectures on stochastic programming:
  modeling and theory, SIAM, 2021.

\bibitem{wilson2018adaptive}
C.~Wilson, V.~V. Veeravalli, A.~Nedi{\'c}, Adaptive sequential stochastic
  optimization, IEEE Transactions on Automatic Control 64~(2) (2018) 496--509.

\bibitem{jiang2016imperfect}
H.~Jiang, U.~V. Shanbhag, On the solution of stochastic optimization and
  variational problems in imperfect information regimes, SIAM Journal on
  Optimization 26~(4) (2016) 2394--2429.

\bibitem{oco}
N.~Ho-Nguyen, F.~K{\i}l{\i}n{\c{c}}-Karzan, Exploiting problem structure in
  optimization under uncertainty via online convex optimization, Mathematical
  Programming 177~(1-2) (2018) 113--147.
\newblock \href {https://doi.org/10.1007/s10107-018-1262-8}
  {\path{doi:10.1007/s10107-018-1262-8}}.

\bibitem{miscen}
H.~Ahmadi, U.~V. Shanbhag, On the resolution of misspecified convex
  optimization and monotone variational inequality problems, Computational
  Optimization and Applications 77~(1) (2020) 125--161.

\bibitem{guolei}
N.~Liu, L.~Guo, Stochastic adaptive linear quadratic differential games, arXiv
  preprint arXiv:2204.08869 (2022).

\bibitem{distributedmis}
A.~Kannan, A.~Nedi{\'c}, U.~V. Shanbhag, Distributed stochastic optimization
  under imperfect information, in: 2015 54th IEEE Conference on Decision and
  Control (CDC), IEEE, 2015, pp. 400--405.

\bibitem{personalizedgra_traking}
I.~Notarnicola, A.~Simonetto, F.~Farina, G.~Notarstefano, Distributed
  personalized gradient tracking with convex parametric models, IEEE
  Transactions on Automatic Control 68~(1) (2023) 588--595.
\newblock \href {https://doi.org/10.1109/TAC.2022.3147007}
  {\path{doi:10.1109/TAC.2022.3147007}}.

\bibitem{du2022computational}
J.~Du, Y.~Liu, Y.~Zhi, H.~Gao, Computational convergence rate analysis of
  distributed optimization algorithm, in: International Conference on Guidance,
  Navigation and Control, Springer, 2022, pp. 5288--5299.

\bibitem{pu2021sharp}
S.~Pu, A.~Olshevsky, I.~C. Paschalidis, A sharp estimate on the transient time
  of distributed stochastic gradient descent, IEEE Transactions on Automatic
  Control 67~(11) (2021) 5900--5915.

\bibitem{liang2019distributed}
S.~Liang, L.~Wang, G.~Yin, Distributed quasi-monotone subgradient algorithm for
  nonsmooth convex optimization over directed graphs, Automatica 101 (2019)
  175--181.

\bibitem{largeML}
L.~Bottou, F.~E. Curtis, J.~Nocedal,
  \href{https://doi.org/10.1137/16M1080173}{Optimization methods for
  large-scale machine learning}, SIAM Review 60~(2) (2018) 223--311.
\newblock \href {https://doi.org/10.1137/16M1080173}
  {\path{doi:10.1137/16M1080173}}.
\newline\urlprefix\url{https://doi.org/10.1137/16M1080173}

\bibitem{fastdis_ave}
L.~Xiao, S.~Boyd, Fast linear iterations for distributed averaging, Systems \&
  Control Letters 53~(1) (2004) 65--78.

\bibitem{qu2017harnessing}
G.~Qu, N.~Li, Harnessing smoothness to accelerate distributed optimization,
  IEEE Transactions on Control of Network Systems 5~(3) (2017) 1245--1260.

\bibitem{FB-LNS}
F.~Bullo, \href{http://motion.me.ucsb.edu/book-lns}{Lectures on Network
  Systems}, {1.6} Edition, Kindle Direct Publishing, 2022.
\newline\urlprefix\url{http://motion.me.ucsb.edu/book-lns}

\bibitem{nips_exp}
B.~Ying, K.~Yuan, Y.~Chen, H.~Hu, P.~Pan, W.~Yin, Exponential graph is provably
  efficient for decentralized deep training, Advances in Neural Information
  Processing Systems 34 (2021) 13975--13987.

\bibitem{bluefog}
B.~Ying, K.~Yuan, H.~Hu, Y.~Chen, W.~Yin, Bluefog: Make decentralized
  algorithms practical for optimization and deep learning, arXiv preprint
  arXiv:2111.04287 (2021).

\end{thebibliography}
\appendix
\section{Proof of Lemma \ref{55.3}}
\label{appA}
\begin{proof}
By using Assumption \ref{assump. moment}, we obtain that
\begin{align}
	\mathbb{E}[&||\bar{g}(\pmb{\mathbf{x}}(k),\pmb{\mathbf{\theta}}(k),\pmb{\mathbf{\xi}}(k))-\bar{\nabla}_xF(\pmb{\mathbf{x}}(k),\pmb{\mathbf{\theta}}(k))||^2|\mathcal{F}(k)] \notag\\
	&=\mathbb{E}\left[ \Big\|\frac{1}{n}\sum\nolimits_{i=1}^{n}g_i(x_i(k),\theta_i(k),\xi_i(k))-\frac{1}{n}\sum\nolimits_{i=1}^{n}\nabla_xf_i(x_i(k),\theta_i(k))\Big\|^2|\mathcal{F}(k)\right]\notag \\
	&=\frac{1}{n^2}\sum\nolimits_{i=1}^{n}\mathbb{E}\left[||g_i(x_i(k),\theta_i(k),\xi_i(k))-\nabla_xf_i(x_i(k),\theta_i(k))||^2|\mathcal{F}(k)\right]\notag\\
	&\leq\frac{1}{n^2}\sum\nolimits_{i=1}^{n}\left(\sigma_x^2+M_x||\nabla_xf_i(x_i(k),\theta_i(k))||^2\right)\leq\frac{\sigma_x^2}{n}+\frac{M_x\sum\nolimits_{i=1}^{n}||\nabla_xf_i(x_i(k),\theta_i(k))||^2}{n^2} \label{5.3},
\end{align}
where the second equality use the fact that $\xi_i,\forall i$ are independent random variables.
By recalling assumption \ref{assump. func_pro}, we achieve
\begin{align}
	&	||\nabla_xf_i(x_i(k),\theta_i(k))||^2
	=||\nabla_xf_i(x_i(k),\theta_i(k))-\nabla_xf_i(x_*,\theta_i(k))\notag\\
	&\qquad+\nabla_xf_i(x_*,\theta_i(k)) -\nabla_xf_i(x_*,\theta_*)+\nabla_xf_i(x_*,\theta_*)||^2\notag\\
	&\leq 3||\nabla_xf_i(x_i(k),\theta_i(k))-\nabla_xf_i(x_*,\theta_i(k))||^2+3||\nabla_xf_i(x_*,\theta_i(k))\notag\\
	&\qquad -\nabla_xf_i(x_*,\theta_*)||^2+3||\nabla_xf_i(x_*,\theta_*)||^2\notag\\
	&\leq3L_x^2||x_i(k)-x_*||^2+3L_{\theta}^2||\theta_i(k)-\theta_*||^2+3||\nabla_xf_i(x_*,\theta_*)||^2\label{5.4}.
\end{align}
Combining (\ref{5.4}) and  (\ref{5.3}) yields the result (\ref{5.1}).
\end{proof}
\section{Proof of Lemma \ref{55.4}}
\label{appB}
\begin{proof}
By recalling the definition of $\bar{x}(k), \bar{\theta}(k)$ and $\bar{\nabla}_xF(\pmb{\mathbf{x}}(k),\pmb{\mathbf{\theta}}(k))$  in (\ref{bar_xtheta}) and (\ref{bar_F}), using Assumption \ref{assump. func_pro}, we have
\begin{align*}
	&||\nabla_xf(\bar{x}(k),\bar{\theta}(k))-\bar{\nabla}_xF(\pmb{\mathbf{x}}(k),\pmb{\mathbf{\theta}}(k))||\\
	=&||\frac{1}{n}\sum\nolimits_{i=1}^{n}\nabla_xf_i(\bar{x}(k),\bar{\theta}(k))-\frac{1}{n}\sum\nolimits_{i=1}^{n}\nabla_xf_i(x_i(k),\theta_i(k))||\\
	\leq&\frac{1}{n}\sum\nolimits_{i=1}^{n}||\nabla_xf_i(\bar{x}(k),\bar{\theta}(k))-\nabla_xf_i(x_i(k),\theta_i(k))||\\
	=&\frac{1}{n}\sum\nolimits_{i=1}^{n}||\nabla_xf_i(\bar{x}(k),\bar{\theta}(k))-\nabla_xf_i(x_i(k),\bar{\theta}(k))+\nabla_xf_i(x_i(k),\bar{\theta}(k))-\nabla_xf_i(x_i(k),\theta_i(k))||\\
	\leq&\frac{1}{n}\sum_{i=1}^{n}\left[||\nabla_xf_i(\bar{x}(k),\bar{\theta}(k))-\nabla_xf_i(x_i(k),\bar{\theta}(k))||+||\nabla_xf_i(x_i(k),\bar{\theta}(k))-\nabla_xf_i(x_i(k),\theta_i(k))||\right]\\
	\leq&\frac{1}{n}\left(L_x\sum\nolimits_{i=1}^{n}||\bar{x}(k)-x_i(k)||+L_{\theta}\sum\nolimits_{i=1}^{n}||\bar{\theta}(k)-\theta_i(k)||\right)\\
	\leq&\frac{L_x}{\sqrt{n}}||\pmb{\mathbf{x}}(k)-\pmb{1}\bar{x}(k)^T||+\frac{L_{\theta}}{\sqrt{n}}||\pmb{\mathbf{\theta}}(k)-\pmb{1}\bar{\theta}(k)^T||,
\end{align*}
\vspace{-5pt}
where the last relation follows from Cauchy-Schwarz inequality.
\end{proof}

%
\vspace{-10pt}
\section{Proof of Lemma \ref{55.5}}
\label{appD}
\begin{proof}
According to the definitions of $\bar{x}(k)$ and $\bar{g}(\pmb{\mathbf{x}}(k),\pmb{\mathbf{\theta}}(k),\pmb{\mathbf{\xi}}(k))$ in (\ref{bar_xtheta}) and (\ref{bar_g}), together with $\sum\nolimits_{i=1}^{n}w_{ij}=1$ form Assumption \ref{assump. graph}, we have
\setlength\abovedisplayskip{3pt}
\setlength\belowdisplayskip{3pt}
\begin{align}
	&	\bar{x}(k+1)=\frac{1}{n}\sum\nolimits_{i=1}^{n}\left(\sum\nolimits_{j=1}^{n}w_{ij}\left(x_j(k)-\alpha_kg_j(x_j(k),\theta_j(k),\xi_j(k))\right)\right)\notag\\
	&=\frac{1}{n}\sum_{j=1}^{n}x_j(k)-\alpha_k\cdot\frac{1}{n}\sum_{j=1}^{n}g_j(x_j(k),\theta_j(k),\xi_j(k))=\bar{x}(k)-\alpha_k\bar{g}(\pmb{\mathbf{x}}(k),\pmb{\mathbf{\theta}}(k),\pmb{\mathbf{\xi}}(k))\label{recur_ave}.
\end{align}
Thus,
\begin{align*} &||\bar{x}(k+1)-x_*||^2=||\bar{x}(k)-\alpha_k\bar{g}(\pmb{\mathbf{x}}(k),\pmb{\mathbf{\theta}}(k),\pmb{\mathbf{\xi}}(k))-x_*||^2\notag\\
	&=||\bar{x}(k)-\alpha_k\bar{\nabla}_xF(\pmb{\mathbf{x}}(k),\pmb{\theta}(k))-x_*+\alpha_k\bar{\nabla}_xF(\pmb{\mathbf{x}}(k),\pmb{\theta}(k))-\alpha_k\bar{g}(\pmb{\mathbf{x}}(k),\pmb{\mathbf{\theta}}(k),\pmb{\mathbf{\xi}}(k))||^2\notag\\
	&=||\bar{x}(k)-\alpha_k\bar{\nabla}_xF(\pmb{\mathbf{x}}(k),\pmb{\theta}(k))-x_*||^2+\alpha_k^2||\bar{\nabla}_xF(\pmb{\mathbf{x}}(k),\pmb{\theta}(k))-\bar{g}(\pmb{\mathbf{x}}(k),\pmb{\mathbf{\theta}}(k),\pmb{\mathbf{\xi}}(k))||^2\notag\\
	&+2\alpha_k(\bar{x}(k)-\alpha_k\bar{\nabla}_xF(\pmb{\mathbf{x}}(k),\pmb{\theta}(k))-x_*)^T\left(\bar{\nabla}_xF(\pmb{\mathbf{x}}(k),\pmb{\theta}(k))-\bar{g}\left(\pmb{\mathbf{x}}(k),\pmb{\mathbf{\theta}}(k),\pmb{\mathbf{\xi}}(k)\right)\right).
\end{align*}
In light of Assumption \ref{assump. moment} and Lemma \ref{55.3}, by taking conditional expectation on both sides of above equation, we have
\begin{align}
	\mathbb{E}[|\bar{x}(k+1)-x_*||^2&|\mathcal{F}(k)]|\leq||\bar{x}(k)-\alpha_k\bar{\nabla}_xF(\pmb{\mathbf{x}}(k),\pmb{\theta}(k))-x_*||^2\notag\\
	&+\alpha_k^2\left(\frac{3M_xL_x^2}{n^2}||\pmb{\mathbf{x}}(k)-\pmb{1}x_*^T||^2+\frac{3M_xL_{\theta}^2}{n^2}||\pmb{\mathbf{\theta}}(k)-\pmb{1}\theta_*^T||^2+\frac{\bar{M}}{n}\right)\label{5.12}.
\end{align}
Next, we bound the first term on the right side of (\ref{5.12}).
\begin{align}
	||\bar{x}&(k)-\alpha_k\bar{\nabla}_xF(\pmb{\mathbf{x}}(k),\pmb{\theta}(k))-x_*||^2\notag\\
	&=||\bar{x}(k)-\alpha_k\nabla_xf(\bar{x}(k);\bar{\theta}(k))-x_*+\alpha_k\nabla_xf(\bar{x}(k),\bar{\theta}(k))-\alpha_k\bar{\nabla}_xF(\pmb{\mathbf{x}}(k),\pmb{\theta}(k))||^2\notag\\
	&=||\bar{x}(k)-\alpha_k\nabla_xf(\bar{x}(k),\bar{\theta}(k))-x_*||^2+\alpha_k^2||\nabla_xf(\bar{x}(k),\bar{\theta}(k))-\bar{\nabla}_xF(\pmb{\mathbf{x}}(k),\pmb{\theta}(k))||^2\notag\\
	&+2\alpha_k(\bar{x}(k)-\alpha_k\nabla_xf(\bar{x}(k),\bar{\theta}(k))-x_*)^T(\nabla_xf(\bar{x}(k),\bar{\theta}(k))-\bar{\nabla}_xF(\pmb{\mathbf{x}}(k),\pmb{\theta}(k)))\notag\\
	&\leq\underbrace{||\bar{x}(k)-\alpha_k\nabla_xf(\bar{x}(k),\bar{\theta}(k))-x_*||^2}_\text{Term 1}+\underbrace{\alpha_k^2||\nabla_xf(\bar{x}(k),\bar{\theta}(k))-\bar{\nabla}_xF(\pmb{\mathbf{x}}(k),\pmb{\theta}(k))||^2}_\text{Term 2}\notag\\
	&+\underbrace{2\alpha_k||\bar{x}(k)-\alpha_k\nabla_xf(\bar{x}(k),\bar{\theta}(k))-x_*||\times||\nabla_xf(\bar{x}(k),\bar{\theta}(k))-\bar{\nabla}_xF(\pmb{\mathbf{x}}(k),\pmb{\theta}(k))||}_\text{Term 3}\label{5.13}.
\end{align}
As for Term 1, by leveraging the fact that $\alpha_k\leq\frac{1}{L}$, Lemma \ref{iteration gradient work} indicates
\begin{equation}
	||\bar{x}(k)-\alpha_k\nabla_xf(\bar{x}(k),\bar{\theta}(k))-x_*||^2\leq(1-\alpha_k\mu_x)^2||\bar{x}(k)-x_*||^2\label{5.14}.
\end{equation}
By Lemma \ref{55.4}, Term 2 can be bounded as follow
\begin{align}
	&\alpha_k^2||\nabla_xf(\bar{x}(k),\bar{\theta}(k))-\bar{\nabla}_xF(\pmb{\mathbf{x}}(k),\pmb{\theta}(k))||^2\leq \left(\frac{\alpha_kL_x||\pmb{\mathbf{x}}(k)-\pmb{1}\bar{x}(k)^T||}{\sqrt{n}}+\frac{\alpha_kL_{\theta}||\pmb{\mathbf{\theta}}(k)-\pmb{1}\bar{\theta}(k)^T||}{\sqrt{n}}\right)^2\notag\\
	&\leq \frac{\alpha_k^2L_x^2||\pmb{\mathbf{x}}(k)-\pmb{1}\bar{x}(k)^T||^2}{n}+\frac{\alpha_k^2L_{\theta}^2||\pmb{\mathbf{\theta}}(k)-\pmb{1}\bar{\theta}(k)^T||^2}{n}+\frac{2L_xL_{\theta}\alpha_k^2}{n}||\pmb{\mathbf{x}}(k)-\pmb{1}\bar{x}(k)^T||\times||\pmb{\mathbf{\theta}}(k)-\pmb{1}\bar{\theta}(k)^T||.\label{5.15}
\end{align}
Finally, Term 3 can be bounded by invoking the same transformation approches used in Term 1 and Term 2:
\begin{align}
	&\text{Term 3}\leq2\alpha_k(1-\alpha_k\mu_x)||\bar{x}(k)-x_*||\left(\frac{L_x}{\sqrt{n}}||\pmb{\mathbf{x}}(k)-\pmb{1}\bar{x}(k)^T||\frac{L_{\theta}}{\sqrt{n}}||\pmb{\mathbf{\theta}}(k)-\pmb{1}\bar{\theta}(k)^T||\right)\notag\\
	&\leq\frac{2\alpha_kL_x(1-\alpha_k\mu_x)||\bar{x}(k)-x_*||||\pmb{\mathbf{x}}(k)-\pmb{1}\bar{x}(k)^T||}{\sqrt{n}}+\frac{2\alpha_kL_{\theta}(1-\alpha_k\mu_x)||\bar{x}(k)-x_*||||\pmb{\mathbf{\theta}}(k)-\pmb{1}\bar{\theta}(k)^T||}{\sqrt{n}}\label{5.16}.
\end{align}
In light of relation (\ref{5.13})$\sim$(\ref{5.16}), taking full expectation on both side of relation (\ref{5.12}) yields the result (\pmb{A}). Furthermore by using mean value inequality $2ab\leq a^2+b^2$, we rearrange (\ref{non_dim.recursion}) and obtain that
\begin{align}
	U_1(k+1)
	&\leq(1-\alpha_k\mu_x)^2U_1(k)+\frac{\alpha_k^2L_x^2}{n}V_1(k)+\frac{\alpha_k^2L_{\theta}^2}{n}V_2(k)+\frac{\alpha_k^2L_x^2}{n}V_1(k)+\frac{\alpha_k^2L_{\theta}^2}{n}V_2(k)\notag\\
	&+(1-\alpha_k\mu_x)^2c_1U_1(k)+\frac{\alpha_k^2L_x^2}{n}\cdot\frac{1}{c_1}V_1(k)+(1-\alpha_k\mu_x)^2c_2U_1(k)+\frac{\alpha_k^2L_\theta^2}{n}\cdot\frac{1}{c_2}V_2(k)\notag\\
	&\qquad+\alpha_k^2\left(\frac{3M_xL_x^2}{n^2}\mathbb{E}[||\pmb{\mathbf{x}}(k)-\pmb{1}x_*^T||]+\frac{3M_xL_{\theta}^2}{n^2}\mathbb{E}[||\pmb{\theta}(k)-\pmb{1}\theta_*^T||]+\frac{\bar{M}}{n}\right)\notag\\
	&\leq(1+c_1+c_2)(1-\alpha_k\mu_x)^2U_1(k)+(2+\frac{1}{c_1})\frac{\alpha_k^2L_x^2}{n}V_1(k)+(2+\frac{1}{c_2})\frac{\alpha_k^2L_{\theta}^2}{n}V_2(k)\notag\\
	&\quad+\alpha_k^2\left(\frac{3M_xL_x^2}{n^2}\mathbb{E}[||\pmb{\mathbf{x}}(k)-\pmb{1}x_*^T||]+\frac{3M_xL_{\theta}^2}{n^2}\mathbb{E}[||\pmb{\theta}(k)-\pmb{1}\theta_*^T||]+\frac{\bar{M}}{n}\right)\label{utemp},
\end{align}
where $c_1,c_2>0$. Take $c_1=c_2=\frac{3}{16}\alpha_k\mu_x$, then $c_1+c_2=\frac{3}{8}\alpha_k\mu_x$. Noticing that $\alpha_k\leq\frac{1}{3\mu_x}$, i.e. $\alpha_k\mu_x\leq\frac{1}{3}$, we have
\begin{equation}
\begin{aligned}
	&(1+c_1+c_2)(1-\alpha_k\mu_x)^2
	=1-\frac{13}{8}\alpha_k\mu_x+\frac{1}{4}\alpha_k^2\mu_x^2+\frac{3}{8}\alpha_k^3\mu_x^3
	\\& \leq 1-\frac{13}{8}\alpha_k\mu_x+\frac{1}{12}\alpha_k\mu_x+\frac{3}{8}\times\frac{1}{9}\alpha_k\mu_x =1-\frac{3}{2}\alpha_k\mu_x,
\end{aligned}
\end{equation}
and $(2+\frac{1}{c_m})\alpha_k\leq \frac{6}{\mu_x},m=1,2$. Plug them into (\ref{utemp}) yeilds the result \pmb{B}.
\end{proof}

\section{Proof of Lemma \ref{55.7}}
\label{appE}
\begin{proof}
Recalling the definition of $\mathbf{g}(\pmb{\mathbf{x}},\pmb{\theta},\pmb{\mathbf{\xi}})$ in (\ref{g_vec}) and relation $\bar{x}(k+1)=\bar{x}(k)-\alpha_k\bar{g}(\pmb{\mathbf{x}}(k),\pmb{\mathbf{\theta}}(k),\pmb{\mathbf{\xi}}(k))$ in  \cref{recur_ave}, and using (\ref{vec_xiter}), we have
\begin{align}
	\pmb{\mathbf{x}}(k+1)-\pmb{1}&\bar{x}(k+1)=W\left(\pmb{\mathbf{x}}(k)-\alpha_k\mathbf{g}\left(\pmb{\mathbf{x}}(k),\pmb{\mathbf{\theta}}(k),\pmb{\xi}(k)\right)\right)-\pmb{1}(\bar{x}(k)-\alpha_k\bar{g}\left(\pmb{\mathbf{x}}(k),\pmb{\mathbf{\theta}}(k),\pmb{\mathbf{\xi}}(k)\right))\notag\\ &=(W-\frac{\pmb{1}\pmb{1}^T}{n})\left[(\pmb{\mathbf{x}}(k)-\pmb{1}\bar{x}(k))-\alpha_k(\mathbf{g}\left(\pmb{\mathbf{x}}(k),\pmb{\mathbf{\theta}}(k),\pmb{\xi}(k)\right)-\pmb{1}\bar{g}\left(\pmb{\mathbf{x}}(k),\pmb{\mathbf{\theta}}(k),\pmb{\mathbf{\xi}}(k)\right))\right]. 	
\end{align}
Thus by Lemma \ref{iteration consensus work}, we obtain

\setlength\abovedisplayskip{1pt}
\setlength\belowdisplayskip{3pt}
\begin{align}
	||\pmb{\mathbf{x}}&(k+1)-\pmb{1}\bar{x}(k+1)||^2\notag\\
	&\leq\rho_w^2||(\pmb{\mathbf{x}}(k)-\pmb{1}\bar{x}(k))-\alpha_k(\mathbf{g}\left(\pmb{\mathbf{x}}(k),\pmb{\mathbf{\theta}}(k),\pmb{\xi}(k)\right)-\pmb{1}\bar{g}\left(\pmb{\mathbf{x}}(k),\pmb{\mathbf{\theta}}(k),\pmb{\mathbf{\xi}}(k)\right)||^2\notag\\
	&=\rho_w^2\big[||\pmb{\mathbf{x}}(k)-\pmb{1}\bar{x}(k)||^2+\underbrace{\alpha_k^2||\mathbf{g}\left(\pmb{\mathbf{x}}(k),\pmb{\mathbf{\theta}}(k),\pmb{\xi}(k)\right)-\pmb{1}\bar{g}\left(\pmb{\mathbf{x}}(k),\pmb{\mathbf{\theta}}(k),\pmb{\mathbf{\xi}}(k)\right)||^2}_\text{Term 4}\notag\\
	&\quad\underbrace{-2\alpha_k(\pmb{\mathbf{x}}(k)
		-\pmb{1}\bar{x}(k))^T(\mathbf{g}\left(\pmb{\mathbf{x}}(k),\pmb{\mathbf{\theta}}(k),\pmb{\xi}(k)\right)-\pmb{1}\bar{g}\left(\pmb{\mathbf{x}}(k),\pmb{\mathbf{\theta}}(k),\pmb{\mathbf{\xi}}(k)\right)}_\text{Term 5}\big]\label{5.22}
\end{align}

In the following, we will separately consider	\text{Term 4} and \text{Term 5}. Note that
\begin{equation}
	\|I-\boldsymbol{1}\boldsymbol{1}^T/n\|\leq 1 \label{1_leq1}.
\end{equation}
The by using  Assumptions \ref{assump. moment} (a) and \ref{assump. moment} (b), we derive
\begin{align} \mathbb{E}\left[\alpha_k^2\right.&\left.||\mathbf{g}(\pmb{\mathbf{x}}(k),\pmb{\mathbf{\theta}}(k),\pmb{\xi}(k))-\pmb{1}\bar{g}\left(\pmb{\mathbf{x}}(k),\pmb{\mathbf{\theta}}(k),\pmb{\mathbf{\xi}}(k)\right)||^2|\mathcal{F}(k)\right]\notag\\
	&=\alpha_k^2\mathbb{E}\left[||\nabla_x\boldsymbol{F}(\pmb{\mathbf{x}}(k),\pmb{\theta}(k))-\pmb{1}\bar{\nabla}_xF(\pmb{\mathbf{x}}(k),\pmb{\theta}(k))-\nabla_x\boldsymbol{F}(\pmb{\mathbf{x}}(k),\pmb{\theta}(k))\right.\notag\\
	&\left.+\pmb{1}\bar{\nabla}_xF(\pmb{\mathbf{x}}(k),\pmb{\theta}(k))+\mathbf{g}(\pmb{\mathbf{x}}(k),\pmb{\mathbf{\theta}}(k)),\pmb{\xi}(k)-\pmb{1}\bar{g}\left(\pmb{\mathbf{x}}(k),\pmb{\mathbf{\theta}}(k),\pmb{\mathbf{\xi}}(k)\right)||^2|\mathcal{F}(k)\right]\notag\\
	&=\alpha_k^2||\nabla_x\boldsymbol{F}(\pmb{\mathbf{x}}(k),\pmb{\theta}(k))-\pmb{1}\bar{\nabla}_xF(\pmb{\mathbf{x}}(k),\pmb{\theta}(k))||^2+\alpha_k^2\mathbb{E}[||\nabla_x\boldsymbol{F}(\pmb{\mathbf{x}}(k),\pmb{\theta}(k))\notag\\
	&\qquad-\mathbf{g}((\pmb{\mathbf{x}}(k),\pmb{\mathbf{\theta}}(k),\pmb{\xi}(k))-\pmb{1}(\bar{\nabla}_xF(\pmb{\mathbf{x}}(k),\pmb{\theta}(k))-\bar{g}(\pmb{\mathbf{x}}(k),\pmb{\mathbf{\theta}}(k)))||^2|\mathcal{F}(k)]\notag\\
	&\overset{\eqref{1_leq1}}{\leq}\alpha_k^2\left[||\nabla_x\boldsymbol{F}(\pmb{\mathbf{x}}(k),\pmb{\theta}(k))-\pmb{1}\bar{\nabla}_xF(\pmb{\mathbf{x}}(k),\pmb{\theta}(k))||^2\right.\notag\\		&\left.\qquad+\mathbb{E}[||\nabla_x\boldsymbol{F}(\pmb{\mathbf{x}}(k),\pmb{\theta}(k))-\mathbf{g}(\pmb{\mathbf{x}}(k),\pmb{\mathbf{\theta}}(k),\pmb{\xi}(k))||^2|\mathcal{F}(k)]\right]\notag\\		&\leq\alpha_k^2||\nabla_x\boldsymbol{F}(\pmb{\mathbf{x}}(k),\pmb{\theta}(k))-\pmb{1}\bar{\nabla}_xF(\pmb{\mathbf{x}}(k),\pmb{\theta}(k))||^2+\alpha_k^2n\sigma_x^2+\alpha_k^2M_x||\nabla_x\boldsymbol{F}(\pmb{\mathbf{x}}(k),\pmb{\theta}(k))||^2\label{5.23}.
\end{align}
Recalling Assumption \ref{assump. moment} (a), we obtain that
\begin{align} \mathbb{E}\big[&-2\alpha_k(\pmb{\mathbf{x}}(k)-\pmb{1}\bar{x}(k))^T\left(\mathbf{g}(\pmb{\mathbf{x}}(k),\pmb{\mathbf{\theta}}(k),\pmb{\xi}(k))-\pmb{1}\bar{g}\left(\pmb{\mathbf{x}}(k),\pmb{\mathbf{\theta}}(k),\pmb{\mathbf{\xi}}(k)\right)\right)|\mathcal{F}(k)\big]	\notag\\
	&=-2\alpha_k(\pmb{\mathbf{x}}(k)-\pmb{1}\bar{x}(k))^T\left(\nabla_x\boldsymbol{F}(\pmb{\mathbf{x}}(k),\pmb{\theta}(k))-\pmb{1}\bar{\nabla}_xF(\pmb{\mathbf{x}}(k),\pmb{\theta}(k))\right)\label{5.24},
\end{align}

In light of (\ref{5.22}),(\ref{5.23}) and (\ref{5.24}), we have
\begin{align}
	\frac{1}{\rho_w^2}\mathbb{E}[&||\pmb{\mathbf{x}}(k+1)-\pmb{1}\bar{x}(k+1)||^2|\mathcal{F}(k)]\notag\\
	&\leq||\pmb{\mathbf{x}}(k)-\pmb{1}\bar{x}(k)||^2+\alpha_k^2||\nabla_x\boldsymbol{F}(\pmb{\mathbf{x}}(k),\pmb{\theta}(k))-\pmb{1}\bar{\nabla}_xF(\pmb{\mathbf{x}}(k),\pmb{\theta}(k))||^2\notag\\
	&\quad+\alpha_k^2n\sigma_x^2+\alpha_k^2M_x||\nabla_x\boldsymbol{F}(\pmb{\mathbf{x}}(k),\pmb{\theta}(k))||^2\notag\\
	&\quad-2\alpha_k(\pmb{\mathbf{x}}(k)-\pmb{1}\bar{x}(k))^T(\nabla_x\boldsymbol{F}(\pmb{\mathbf{x}}(k),\pmb{\theta}(k))-\pmb{1}\bar{\nabla}_xF(\pmb{\mathbf{x}}(k),\pmb{\theta}(k)))\notag\\
	&\leq||\pmb{\mathbf{x}}(k)-\pmb{1}\bar{x}(k)||^2+\alpha_k^2||\nabla_x\boldsymbol{F}(\pmb{\mathbf{x}}(k),\pmb{\theta}(k))-\pmb{1}\bar{\nabla}_xF(\pmb{\mathbf{x}}(k),\pmb{\theta}(k))||^2\notag\\
	&\quad+\alpha_k^2n\sigma_x^2+\alpha_k^2M_x||\nabla_x\boldsymbol{F}(\pmb{\mathbf{x}}(k),\pmb{\theta}(k))||^2\notag\\
	&\quad+c_3||\pmb{\mathbf{x}}(k)-\pmb{1}\bar{x}(k)||^2+\frac{1}{c_3}\alpha_k^2||\nabla_x\boldsymbol{F}(\pmb{\mathbf{x}}(k),\pmb{\theta}(k))-\pmb{1}\bar{\nabla}_xF(\pmb{\mathbf{x}}(k),\pmb{\theta}(k))||^2\notag\\
	&\leq(1+c_3)||\pmb{\mathbf{x}}(k)-\pmb{1}\bar{x}(k)||^2+\alpha_k^2n\sigma_x^2+\alpha_k^2\left(1+M_x+\frac{1}{c_3}\right)||\nabla_x\boldsymbol{F}(\pmb{\mathbf{x}}(k),\pmb{\theta}(k))||^2\label{5.25},
\end{align}
where $c_3>0$ is arbitrary, and the last inequality also uses the porperty in (\ref{1_leq1}).

We then consider the upper bound of $||\nabla_xF(\pmb{\mathbf{x}}(k),\pmb{\theta}(k))||^2$ as follow,
\begin{align}
	&		||\nabla_x\boldsymbol{F}(\pmb{\mathbf{x}}(k),\pmb{\theta}(k))||^2 =||\nabla_x\boldsymbol{F}(\pmb{\mathbf{x}}(k),\pmb{\theta}(k))-\nabla_x\boldsymbol{F}(\pmb{1}x_*^T,\pmb{\theta}(k))\notag\\
	&\qquad\qquad+\nabla_x\boldsymbol{F}(\pmb{1}x_*^T,\pmb{\theta}(k))-\nabla_x\boldsymbol{F}(\pmb{1}x_*^T,\pmb{1}\theta_*^T)+\nabla_x\boldsymbol{F}(\pmb{1}x_*^T,\pmb{1}\theta_*^T)||^2\notag\\
	&\leq3||\nabla_x\boldsymbol{F}(\pmb{\mathbf{x}}(k),\pmb{\theta}(k))-\nabla_x\boldsymbol{F}(\pmb{1}x_*^T,\pmb{\theta}(k))||^2\notag\\
	&\quad+3||\nabla_x\boldsymbol{F}(\pmb{1}x_*^T,\pmb{\theta}(k))-\nabla_x\boldsymbol{F}(\pmb{1}x_*^T,\pmb{1}\theta_*^T)||^2+3||\nabla_x\boldsymbol{F}(\pmb{1}x_*^T,\pmb{1}\theta_*^T)||^2\notag\\
	&\leq3L_x^2||\pmb{\mathbf{x}}(k)-\pmb{1}\bar{x}(k)||^2+3L_{\theta}^2||\pmb{\theta}(k)-\pmb{1}\theta_*^T||^2+3||\nabla_x\boldsymbol{F}(\pmb{1}x_*^T,\pmb{1}\theta_*^T)||^2\label{5.26}.
\end{align}
Let $c_3=\frac{1-\rho_w^2}{2}$. Combining (\ref{5.25}) and (\ref{5.26}), we obtain that
\begin{align}
	\frac{1}{\rho_w^2}\mathbb{E}[||\pmb{\mathbf{x}}&(k+1)-\pmb{1}\bar{x}(k+1)||^2|\mathcal{F}(k)]\notag\\
	&\leq\frac{3-\rho_w^2}{2}||\pmb{\mathbf{x}}(k)-\pmb{1}\bar{x}(k)||^2+3\alpha_k^2\left(\frac{3}{1-\rho_w^2}+M_x\right)\left(L_x^2||\pmb{\mathbf{x}}(k)-\pmb{1}\bar{x}(k)||^2\right.\notag\\
	&\qquad\left.+L_{\theta}^2||\pmb{\theta}(k)-\pmb{1}\theta_*^T||^2+||\nabla_x\boldsymbol{F}(\pmb{1}x_*^T,\pmb{1}\theta_*^T)||^2\right)+\alpha_k^2n\sigma_x^2\label{5.27}.
\end{align}
Note that $\rho_w^2(\frac{3-\rho_w^2}{2})\leq\frac{3+\rho_w^2}{4}$ by $\rho_w\in(0,1)$. Then by taking full expectation on both sides of (\ref{5.27}) and multiplying $\rho_w^2$ leads to the result (\ref{v_recursion}).
\end{proof}

\section{Proof of Lemma \ref{55.8}}
\label{appC}
\begin{proof}
For any $k\geq0$, in order to bound $\mathbb{E}[||\pmb{\mathbf{x}}(k)-\pmb{1}x_*^T||^2]$, we firstly  consider bounding $\mathbb{E}[||x_i(k)-\alpha_kg_i(x_i(k),\theta_i(k),\xi_i(k))-x_*||^2]$ for all $i\in \mathcal{N}$. By using Assumption \ref{assump. func_pro} (i) and Assumption \ref{assump. moment} (c), we have
\begin{align}
	&\mathbb{E}[||x_i(k)-\alpha_kg_i(x_i(k),\theta_i(k),\xi_i(k))-x_*||^2|\mathcal{F}(k)]=||x_i(k)-x_*-\alpha_k\nabla_xf_i(x_i(k),\theta_i(k))||^2\notag\\
	&\qquad+\alpha_k^2\mathbb{E}\left[||\nabla_xf_i(x_i(k),\theta_i(k))-g_i(x_i(k),\theta_i(k),\xi_i(k))||^2|\mathcal{F}(k)\right]\notag\\
	&\leq||x_i(k)-x_*||^2-2\alpha_k\nabla_xf_i(x_i(k),\theta_i(k))^T(x_i(k)-x_*)\notag\\
	&\qquad+\alpha_k^2||\nabla_xf_i(x_i(k),\theta_i(k))||^2+\alpha_k^2(\sigma_x^2+M_x||\nabla_xf_i(x_i(k),\theta_i(k))||^2)\notag\\
	&\leq||x_i(k)-x_*||^2-2\alpha_k\mu_x||x_i(k)-x_*||^2 +2\alpha_k||\nabla_xf_i(x_*,\theta_i(k))||||x_i(k)-x_*||\notag\\ &\qquad+\alpha_k^2(1+M_x)||\nabla_xf_i(x_i(k),\theta_i(k))||^2+\alpha_k^2\sigma_x^2\label{5.32},
\end{align}

Consider the upper bound of the term $||\nabla_xf_i(x_i(k), \theta_i(k))||^2$ on the right side of above inequality. Using Assumption \ref{assump. func_pro} (i) and (ii), we have
\begin{align}
	||\nabla_xf_i(x_i(k),\theta_i(k))||^2&=||\nabla_xf_i(x_i(k),\theta_i(k))-\nabla_xf_i(x_*,\theta_i(k))+\nabla_xf_i(x_*,\theta_i(k))\notag\\
	&\qquad-\nabla_xf_i(x_*,\theta_*)+\nabla_xf_i(x_*,\theta_*)||\notag\\
	&\leq3L_x^2||x_i(k)-x_*||^2+3L_{\theta}^2||\theta_i(k)-\theta_*||^2+3||\nabla_xf_i(x_*,\theta_*)||^2\label{5.33}.
\end{align}
We can similarly obtain $||\nabla_xf_i(x_*,\theta_i(k))||^2\leq 2L_{\theta}^2||\theta_i(k)-\theta_*||^2+2||\nabla_xf_i(x_*,\theta_*)||^2$.
Combining (\ref{5.33}) and (\ref{5.32}), it produces
\begin{align}
	\mathbb{E}[||x_i(k)&-\alpha_kg_i(x_i(k),\theta_i(k),\xi_i(k))-x_*||^2|\mathcal{F}(k)]\leq||x_i(k)-x_*||^2-2\alpha_k\mu_x||x_i(k)-x_*||^2\notag\\
	&\quad+\alpha_k^2\sigma_x^2+2\alpha_k\sqrt{2L_{\theta}^2||\theta_i(k)-\theta_*||^2+2||\nabla_xf_i(x_*,\theta_*)||^2}||x_i(k)-x_*||\notag\\
	&\quad+\alpha_k^2(1+M_x)(3L_x^2||x_i(k)-x_*||^2+3L_{\theta}^2||\theta_i(k)-\theta_*||^2+3||\nabla_xf_i(x_*,\theta_*)||^2)\notag\\
	&\leq(1-2\alpha_k\mu_x+3\alpha_k^2(1+M_x)L_x^2)||x_i(k)-x_*||^2\notag\\
	&\quad+2\alpha_k\sqrt{2L_{\theta}^2||\theta_i(k)-\theta_*||^2+2||\nabla_xf_i(x_*,\theta_*)||^2}||x_i(k)-x_*||\notag\\
	&\quad+\alpha_k^2[3(1+M_x)L_{\theta}^2||\theta_i(k)-\theta_*||^2+3(1+M_x)||\nabla_xf_i(x_*,\theta_*)||^2+\sigma_x^2]\label{5.34}.
\end{align}
From the definition of $K$ in (\ref{5.30}), for all $k\geq 0$, we have $\alpha_k\leq\frac{\mu_x}{3(1+M_x)L_x^2}$. Recall the fact that $\mathbb{E}[||\theta_i(k)-\theta_*||^2]\leq\hat{\Theta}_i$ in \eqref{theta_bound}. By taking full expectation on both sides of (\ref{5.34}) and using $ \mathbb{E}[||x_i(k)-x_*||]\leq  \sqrt{\mathbb{E}[||x_i(k)-x_*||^2]}$, we have
\begin{align}
	\mathbb{E}[||x_i(k)-&\alpha_k g_i(x_i(k),\theta_i(k),\xi_i(k))-x_*||^2]\leq(1-\alpha_k\mu_x)\mathbb{E}[||x_i(k)-x_*||^2]\notag\\
	&\qquad+2\alpha_k\sqrt{2L_{\theta}^2\hat{\Theta}_i+2||\nabla_xf_i(x_*,\theta_*)||^2}\sqrt{\mathbb{E}[||x_i(k)-x_*||^2]}\notag\\
	&\qquad+\alpha_k\left[\frac{\mu_xL_{\theta}^2}{L_x^2}\hat{\Theta}_i+\frac{\mu_x}{L_x^2}||\nabla_xf_i(x_*,\theta_*)||^2+\frac{\mu_x\sigma_x^2}{3(1+M_x)L_x^2}\right]\notag\\
	&\quad\leq\mathbb{E}[||x_i(k)-x_*||^2]-\alpha_k\bigg[\mu_x\mathbb{E}||x_i(k)-x_*||^2\notag\\
	&\qquad-2\sqrt{2L_{\theta}^2\hat{\Theta}_i+2||\nabla_xf_i(x_*,\theta_*)||^2}\sqrt{\mathbb{E}[||x_i(k)-x_*||^2]}\notag\\
	&\qquad-\left(\frac{\mu_xL_{\theta}^2}{L_x^2}\hat{\Theta}_i+\frac{\mu_x}{L_x^2}||\nabla_xf_i(x_*,\theta_*)||^2+\frac{\mu_x\sigma_x^2}{3(1+M_x)L_x^2}\right)\bigg].
	\label{5.36}
\end{align}
Next, we consider the following set:
\setlength\abovedisplayskip{3pt}
\setlength\belowdisplayskip{3pt}
\begin{align}
	\mathcal{X}_i \triangleq \bigg \{q\geq0:&\mu_x q-2\sqrt{2L_{\theta}^2\hat{\Theta}_i+2||\nabla_xf_i(x_*,\theta_*)||^2}\sqrt{q}\notag\\
	&-\frac{\mu_x}{3L_x^2}\left(3L_{\theta}^2\hat{\Theta}_i+3||\nabla_xf_i(x_*,\theta_*)||^2
	+\frac{\sigma_x^2}{1+M_x}\right)\leq0\bigg\}.\label{kaset}
\end{align}
It can be seen that $\mathcal{X}_i$ is non-empty and compact. If $\mathbb{E}[||x_i(k)-x_*||^2] \notin \mathcal{X}_i$, in light of (\ref{5.36}) we known that $\mathbb{E}[||x_i(k)-\alpha_k g_i(x_i(k),\theta_i(k),\xi_i(k))-x_*||^2]\leq\mathbb{E}[||x_i(k)-x_*||^2]$.
While for $\mathbb{E}[||x_i(k)-x_*||^2] \in \mathcal{X}_i$, by using $\alpha_k\leq\frac{\mu_x}{3(1+M_x)L_x^2}$, we derive
\begin{align}
	&\mathbb{E}[||x_i(k)-\alpha_k g_i(x_i(k),\theta_i(k),\xi_i(k))-x_*||^2]\notag\\
	&\leq\max_{q\in\mathcal{X}_i}\Bigg\{q -\frac{\mu_x}{3(1+M_x)L_x^2} \Big[\mu_x q-2\sqrt{2L_{\theta}^2\hat{\Theta}_i+2||\nabla_xf_i(x_*,\theta_*)||^2}\sqrt{ q}\notag\\
	&\qquad-\frac{\mu_x}{3L_x^2}\left(3L_{\theta}^2\hat{\Theta}_i+3||\nabla_xf_i(x_*,\theta_*)||^2
	+\frac{\sigma_x^2}{1+M_x}\right)\Big] \Bigg\}\triangleq R_i\label{5.38}.
\end{align}
Based on previous arguments, we conclude that for all $k>0$,
\begin{equation}
	\mathbb{E}[||x_i(k)-\alpha_k g_i(x_i(k),\theta_i(k),\xi_i(k))-x_*||^2]\leq\max\left\{\mathbb{E}[||x_i(k)-x_*||^2],R_i\right\}\label{Emax}.
\end{equation}
In light of $W\boldsymbol{1}=\boldsymbol{1}$, by noting from (\ref{vec_xiter}) that
\begin{align}
	\|\pmb{\mathbf{x}}(k+1)-\boldsymbol{1}x_*^T\|^2&\leq\|W\|^2\|\pmb{\mathbf{x}}(k)-\alpha_k\mathbf{g}(\pmb{\mathbf{x}}(k),\pmb{\theta}(k),\pmb{\mathbf{\xi}}(k))-\boldsymbol{1}x_*^T\|^2\notag\\
	&\leq\|\pmb{\mathbf{x}}(k)-\alpha_k\mathbf{g}(\pmb{\mathbf{x}}(k),\pmb{\theta}(k),\pmb{\mathbf{\xi}}(k))-\boldsymbol{1}x_*^T\|^2\label{vecinter}.
\end{align}
This together with  (\ref{Emax}) produces
\begin{equation}
	 \mathbb{E}[||\pmb{\mathbf{x}}(k)-\pmb{1}x_*^T||^2]\leq\max\left\{\mathbb{E}[||\pmb{\mathbf{x}}(0)-\pmb{1}x_*^T||^2],\sum\nolimits_{i=1}^{n}R_i\right\}\label{5.42}
\end{equation}

In the following,  we will give an upper bound of  $R_i$. From the definition of $\mathcal{X}_i$ in (\ref{kaset}), we  know that
the right zero of the upward opening parabola is
\begin{equation}
	\begin{aligned}
		&\sqrt{q_i}=\frac{1}{2\mu_x}\Bigg[2\sqrt{2L_{\theta}^2\hat{\Theta}_i+2||\nabla_xf_i(x_*,\theta_*)||^2}\notag\\
		&+\sqrt{4(2L_{\theta}^2\hat{\Theta}_i+2||\nabla_xf_i(x_*,\theta_*)||^2)
			+\frac{4\mu_x^2}{3L_x^2}\left(3L_{\theta}^2\hat{\Theta}_i+3||\nabla_xf_i(x_*,\theta_*)||^2+\frac{\sigma_x^2}{1+M_x}\right)}\Bigg].
	\end{aligned}
\end{equation}
Then by using $\mu_x\leq L_x$, we achieve
\begin{equation}
	\begin{aligned}
		&q_i\leq\frac{1}{4\mu_x^2}\Bigg[2\times4\big(2L_{\theta}^2\hat{\Theta}_i+2||\nabla_xf_i(x_*,\theta_*)||^2\big)\notag\\
		&\quad+2\left(8L_{\theta}^2\hat{\Theta}_i+8||\nabla_xf_i(x_*,\theta_*)||^2+ 4 L_{\theta}^2\hat{\Theta}_i \right. \left. + 4||\nabla_xf_i(x_*,\theta_*)||^2 +\frac{4\mu_x^2\sigma_x^2}{3L_x^2(1+M_x)}\right)\Bigg]\notag\\
		&\leq\frac{10L_{\theta}^2\hat{\Theta}_i}{\mu_x^2}+\frac{10||\nabla_xf_i(x_*,\theta_*)||^2}{\mu_x^2}+\frac{2\sigma_x^2}{3(1+M_x)L_x^2} \triangleq q_i^*.
	\end{aligned}
\end{equation}

Thus, $\mathcal{X}_i=[0,q_i]\subset [0,q_i^*]$. Hence from (\ref{5.38}) it follows that
\begin{align}
	R_i&\leq  q_i^*-\frac{\mu_x}{3(1+M_x)L_x^2} \Bigg[\mu_x q-2\sqrt{2L_{\theta}^2\hat{\Theta}_i+2||\nabla_xf_i(x_*,\theta_*)||^2}\sqrt{ q}\notag\\
	&\qquad-\frac{\mu_x}{3L_x^2}\left(3L_{\theta}^2\hat{\Theta}_i+3||\nabla_xf_i(x_*,\theta_*)||^2+\frac{\sigma_x^2}{1+M_x}\right)\Bigg]\Bigg{|}_{q=\frac{\sqrt{2L_{\theta}^2\hat{\Theta}_i+2||\nabla_xf_i(x_*,\theta_*)||^2}}{\mu_x}}\notag\\
	&\leq \frac{10L_{\theta}^2\hat{\Theta}_i}{\mu_x^2}+\frac{10||\nabla_xf_i(x_*,\theta_*)||^2}{\mu_x^2}+\frac{2\sigma_x^2}{3(1+M_x)L_x^2}\notag\\
	&\qquad+\frac{\mu_x}{3(1+M_x)L_x^2}\left[\frac{\mu_x}{3L_x^2}\left(3L_{\theta}^2\hat{\Theta}_i+3||\nabla_xf_i(x_*,\theta_*)||^2+\frac{\sigma_x^2}{1+M_x}\right)\right.\notag\\
	&\left.\qquad+\frac{2L_{\theta}^2\hat{\Theta}_i+2||\nabla_xf_i(x_*,\theta_*||^2)}{\mu_x}\right]\notag\\
	&\leq \frac{11L_{\theta}^2\hat{\Theta}_i}{\mu_x^2}+\frac{11||\nabla_xf_i(x_*,\theta_*)||^2}{\mu_x^2}
	+\frac{7\sigma_x^2}{9(1+M_x)L_x^2}\label{5.45},
\end{align}
where the last inequality has used $\mu_x\leq L_x$.

Combing (\ref{5.45}) and (\ref{5.42}), the lemma holds.
\end{proof}











\end{document}